\newcommand{\numberseries}{\bfseries}   %Fontseries used for numbering
\newlength{\thmtopspace}                %Space above theorem
\newlength{\thmbotspace}                %Space below theorem
\newlength{\thmheadspace}               %Space after theorem label
\newlength{\thmindent}                  %For indenting
\newtheoremstyle{bfupright head,slanted body}
                {\thmtopspace}{\thmbotspace}
                {\slshape}{\thmindent}{\bfseries}{.}{\thmheadspace}
                {{\numberseries \thmnumber{#2\;}}\thmnote{#3}}
\newtheoremstyle{bfupright head,upright body}
                {\thmtopspace}{\thmbotspace}
                {\upshape}{\thmindent}{\bfseries}{.}{\thmheadspace}
                {{\numberseries \thmnumber{#2\;}}\thmnote{#3}}
\newtheoremstyle{fixed bf head,slanted body}
                {\thmtopspace}{\thmbotspace}{\slshape}
                {\thmindent}{\bfseries}{.}{\thmheadspace}
                {{\numberseries \thmnumber{#2\;}}\thmname{#1}\thmnote{ (#3)}}
\newtheoremstyle{fixed bf head,upright body}
                {\thmtopspace}{\thmbotspace}{\upshape}
                {\thmindent}{\bfseries}{.}{\thmheadspace}
                {{\numberseries \thmnumber{#2\;}}\thmname{#1}\thmnote{ (#3)}}
\newtheoremstyle{numbered paragraph}
                {\thmtopspace}{\thmbotspace}{\upshape}
                {\thmindent}{\upshape}{}{\thmheadspace}
                {{\numberseries \thmnumber{#2.}}}
\theoremstyle{bfupright head,slanted body}
\newtheorem{res}{}[section]             \newtheorem*{res*}{}
\theoremstyle{bfupright head,upright body}
\newtheorem{bfhpg}[res]{}               \newtheorem*{bfhpg*}{}
\theoremstyle{fixed bf head,slanted body}
\newtheorem{thm}[res]{Theorem}          \newtheorem*{thm*}{Theorem}
\newtheorem{prp}[res]{Proposition}      \newtheorem*{prp*}{Proposition}
\newtheorem{cor}[res]{Corollary}        \newtheorem*{cor*}{Corollary}
\newtheorem{lem}[res]{Lemma}            \newtheorem*{lem*}{Lemma}
\theoremstyle{fixed bf head,upright body}
\newtheorem{dfn}[res]{Definition}       \newtheorem*{dfn*}{Definition}
\newtheorem{rmk}[res]{Remark}           \newtheorem*{rmk*}{Remark}
          \newtheorem*{exa*}{Example}
\theoremstyle{numbered paragraph}
\newtheorem{ipg}[res]{}
\newlength{\thmlistleft}        %leftmargin
\newlength{\thmlistright}       %rightmargin
\newlength{\thmlistpartopsep}   %partopsep
\newlength{\thmlisttopsep}      %topsep
\newlength{\thmlistparsep}      %parsep
\newlength{\thmlistitemsep}     %itemsep
\newcounter{eqc} 
\newenvironment{eqc}{\begin{list}{\upshape (\textit{\roman{eqc}})}%
    {\usecounter{eqc}%
      \setlength{\leftmargin}{\thmlistleft}%
      \setlength{\labelwidth}{\thmlistleft}%
      \setlength{\rightmargin}{\thmlistright}%
      \setlength{\partopsep}{\thmlistpartopsep}%
      \setlength{\topsep}{\thmlisttopsep}%
      \setlength{\parsep}{\thmlistparsep}%
      \setlength{\itemsep}{\thmlistitemsep}}}%
  {\end{list}}%
\newcommand{\eqclbl}[1]{{\upshape(\textit{#1})}}
\newcounter{rqm}
\newenvironment{rqm}{\begin{list}{\upshape (\arabic{rqm})}%
    {\usecounter{rqm}%
      \setlength{\leftmargin}{\thmlistleft}%
      \setlength{\labelwidth}{\thmlistleft}%
      \setlength{\rightmargin}{\thmlistright}%
      \setlength{\partopsep}{\thmlistpartopsep}%
      \setlength{\topsep}{\thmlisttopsep}%
      \setlength{\parsep}{\thmlistparsep}%
      \setlength{\itemsep}{\thmlistitemsep}}}%
  {\end{list}}%
  \newcommand{\proofofimp}[3][:]{\mbox{\eqclbl{#2}$\!\implies\!$\eqclbl{#3}#1}}
\newcommand{\pgref}[1]{\ref{#1}}
\newcommand{\thmref}[2][Theorem~]{#1\pgref{thm:#2}}
\newcommand{\corref}[2][Corollary~]{#1\pgref{cor:#2}}
\newcommand{\prpref}[2][Proposition~]{#1\pgref{prp:#2}}
\newcommand{\lemref}[2][Lemma~]{#1\pgref{lem:#2}}
\newcommand{\dfnref}[2][Definition~]{#1\pgref{dfn:#2}}
\newcommand{\exaref}[2][Example~]{#1\pgref{exa:#2}}
\renewcommand{\eqref}[1]{(\pgref{eq:#1})}
\newcommand{\thmcite}[2][?]{\cite[thm.~#1]{#2}}
\newcommand{\corcite}[2][?]{\cite[cor.~#1]{#2}}
\newcommand{\prpcite}[2][?]{\cite[prop.~#1]{#2}}
\newcommand{\chpcite}[2][?]{\cite[chap.~#1]{#2}}
\newcommand{\seccite}[2][?]{\cite[sec.~#1]{#2}}
\def\urltilda{\kern -.15em\lower .7ex\hbox{\~{}}\kern .04em}
\newcommand{\setof}[3][\mspace{1mu}]{\{#1#2 \mid #3#1\}}
\newcommand{\kk}{\Bbbk}
\newcommand{\ZZ}{\mathbb{Z}}
\newcommand{\deq}{\:=\:}
\newcommand{\dis}{\:\is\:}
\DeclareMathOperator*{\colim}{colim}
\newcommand{\f}{\varphi}
\newcommand{\m}{\mathfrak{m}}
\newcommand{\s}{\sigma} 
\newcommand{\ot}{\otimes}
\newcommand{\is}{\cong}
\newcommand{\qis}{\simeq}
\renewcommand{\le}{\leqslant}
\renewcommand{\ge}{\geqslant}
\newcommand{\onto}{\twoheadrightarrow}
\newcommand{\lra}{\longrightarrow}
\newcommand{\xra}[2][]{\xrightarrow[#1]{\;#2\;}}
\newcommand{\qra}{\xra{\smash{\qis}}}
\newcommand{\Rop}{R^\circ}
\newcommand{\mapdef}[4][\rightarrow]{\nobreak{#2\colon #3 #1 #4}}
\newcommand{\dmapdef}[4][\lra]{\nobreak{#2\colon #3\:#1\:#4}}
\newcommand{\Ker}[1]{\nobreak{\operatorname{Ker}#1}}
\newcommand{\Coker}[1]{\nobreak{\operatorname{Coker}#1}}
\renewcommand{\H}[2][]{\operatorname{H}_{#1}(#2)}
\newcommand{\HH}[2][]{\operatorname{H}^{#1}(#2)}
\newcommand{\Thb}[2]{#2_{{\scriptscriptstyle\ge}#1}}
\newcommand{\Hom}[3][R]{\operatorname{Hom}_{#1}(#2,#3)}
\newcommand{\Ext}[4][R]{\operatorname{Ext}_{#1}^{#2}(#3,#4)}
\newcommand{\tp}[3][R]{\nobreak{#2\otimes_{#1}#3}}
\newcommand{\tpp}[3][R]{(\tp[#1]{#2}{#3})}
\newcommand{\Tor}[4][R]{\operatorname{Tor}^{#1}_{#2}(#3,#4)}
\def\@nobreak@#1{\mathchoice%
  {\nobreakdef@\displaystyle\f@size{#1}}%
  {\nobreakdef@\nobreakstyle\tf@size{\firstchoice@false #1}}%
  {\nobreakdef@\nobreakstyle\sf@size{\firstchoice@false #1}}%
  {\nobreakdef@\nobreakstyle\ssf@size{\firstchoice@false #1}}%
  \check@mathfonts}%
\def\nobreakdef@#1#2#3{\hbox{{%
                    \everymath{#1}%
                    \let\f@size#2\selectfont%
                    #3}}}%
\def\widebardisplay#1{%
  \setbox0=\hbox{$\displaystyle #1$}
  \dimen0=\wd0%
  \advance\dimen0 by -2pt% Antallet af punkter, linjen skal v�re
                         % kortere.
  \vbox{%
    \nointerlineskip%
    \moveright 1pt % Antallet af punkter, linjen forrykkes mod h�jre.
    \vbox{\hrule width \dimen0}%
    \nointerlineskip%
    \kern 2pt% Den lodrette afstand mellem linjen og symbolerne.
    \box0%
    }%
  }
\def\widebartext#1{%
  \setbox0=\hbox{$#1$}
  \dimen0=\wd0%
  \advance\dimen0 by -2pt% Antallet af punkter, linjen skal v�re
                         % kortere.
  \vbox{%
    \nointerlineskip%
    \moveright 1pt % Antallet af punkter, linjen forrykkes mod h�jre.
    \vbox{\hrule width \dimen0}%
    \nointerlineskip%
    \kern 1.6pt% Den lodrette afstand mellem linjen og symbolerne.
    \box0%
    }%
  }
\def\widebarscript#1{%
  \setbox0=\hbox{$\scriptstyle #1$}
  \dimen0=\wd0%
  \advance\dimen0 by -3pt% Antallet af punkter, linjen skal v�re
                         % kortere.
  \vbox{%
    \nointerlineskip%
    \moveright 1.5pt % Antallet af punkter, linjen forrykkes mod h�jre.
    \vbox{\hrule width \dimen0}%
    \nointerlineskip%
    \kern .8pt% Den lodrette afstand mellem linjen og symbolerne.
    \box0%
    }%
  }
\def\widebarscriptscript#1{%
  \setbox0=\hbox{$\scriptscriptstyle #1$}
  \dimen0=\wd0%
  \advance\dimen0 by -2pt% Antallet af punkter, linjen skal v�re
                         % kortere.
  \vbox{%
    \nointerlineskip%
    \moveright 1pt % Antallet af punkter, linjen forrykkes mod h�jre.
    \vbox{\hrule width \dimen0}%
    \nointerlineskip%
    \kern .6pt% Den lodrette afstand mellem linjen og symbolerne.
    \box0%
    }%
  }
\def\widebar#1{\mathchoice%
  {\widebardisplay{#1}}%
  {\widebartext{#1}}%
  {\widebarscript{#1}}%
  {\widebarscriptscript{#1}}%
  }
\newcommand{\tha}[2]{#2^{{\scriptscriptstyle\ge}#1}}
\newcommand{\btp}[3][R]{\nobreak{#2\mathbin{\widebar{\otimes}}_{#1}#3}}
\newcommand{\btpp}[3][R]{(\nobreak{#2\mathbin{\widebar{\otimes}}_{#1}#3})}
\newcommand{\ttp}[3][R]{\nobreak{#2\mathbin{\widetilde{\otimes}}_{#1}#3}}
\newcommand{\ttpp}[3][R]{(\nobreak{#2\mathbin{\widetilde{\otimes}}_{#1}#3})}
\newcommand{\bTor}[4][R]{\widebar{\operatorname{Tor}}_{#2}^{#1}(#3,#4)}
\newcommand{\Ttor}[4][R]{\smash{\operatorname{\widehat{Tor}}}_%
  {#2}^{{#1}^{\phantom{|\mspace{-6mu}}}}(#3,#4)}
\newcommand{\Stor}[4][R]{\smash{\operatorname{\widetilde{Tor}}}_%
  {#2}^{{#1}^{\phantom{|\mspace{-6mu}}}}(#3,#4)}
\newcommand{\Ctor}[4][R]{\smash{\widecheck{\operatorname{Tor}}}_%
  {#2}^{{#1}^{\phantom{|\mspace{-6mu}}}}(#3,#4)}
\newcommand{\Text}[4][R]{\smash{\operatorname{\widehat{Ext}}%
  }_{#1}^{#2^{\phantom{|}\mspace{-6mu}}}(#3,#4)}
\newcommand{\susp}{\mathsf{\Sigma}}
\DeclareFontFamily{U}{mathx}{\hyphenchar\font45}
\DeclareFontShape{U}{mathx}{m}{n}{ <5> <6> <7> <8> <9> <10> <10.95>
  <12> <14.4> <17.28> <20.74> <24.88> mathx10 }{}
\DeclareSymbolFont{mathx}{U}{mathx}{m}{n}
\DeclareMathAccent{\widecheck}{0}{mathx}{"71}
\DeclareMathAccent{\wideparen}{0}{mathx}{"75}
\newcommand{\sa}{\operatorname{S}}
\newcommand{\T}{\operatorname{T}}
\newcommand{\U}{\operatorname{U}}
\newcommand{\sfli}{\operatorname{sfli}}
\newcommand{\DD}{\operatorname{D}}
\newcommand{\iI}{\mathcal{I}}
\newcommand{\xExthat}[4]{\underset{\mathrm{\scriptscriptstyle
      #1}}{\operatorname{\widehat{Ext}}}{}^{#2}(#3,#4)}
\newcommand{\XExthat}[1]{\underset{\mathrm{\scriptscriptstyle
      #1}}{\operatorname{\widehat{Ext}}}}
\newcommand{\uHom}[2]{\underline{\operatorname{Hom}}(#1,#2)}
\newcommand{\oHom}[2]{\widebar{\operatorname{Hom}}(#1,#2)}
\newcommand{\syz}[2]{\operatorname{\Omega}_{#1}#2}
\begin{document}
\title{Complete homology over associative rings}

\author[O. Celikbas]{Olgur Celikbas} \address{University of
  Connecticut, Storrs, CT~06269, U.S.A}

\email{olgur.celikbas@uconn.edu}

\author[L.\,W. Christensen]{Lars Winther Christensen}

\address{Texas Tech University, Lubbock, TX 79409, U.S.A.}

\email{lars.w.christensen@ttu.edu}

\urladdr{http://www.math.ttu.edu/\urltilda lchriste}

\author[L. Liang]{Li Liang}

\address{Lanzhou Jiaotong University, Lanzhou 730070, China}

\email{lliangnju@gmail.com}

\author[G. Piepmeyer]{Greg Piepmeyer}

\address{Columbia Basin College, Pasco, WA~99301, U.S.A.}

\email{gpiepmeyer@columbiabasin.edu}

\thanks{This research was partly supported by NSA grant H98230-14-0140
  and Simons Foundation Collaboration Grant 281886 (L.W.C.) and by
  NSFC grant 11301240 (L.L.); it was developed during L.L.'s year-long
  visit to Texas Tech University. We gratefully acknowledge the
  hospitality of the Math Department at Texas Tech, and the support
  from the China Scholarship Council.}

\date{1 June 2015}

\keywords{Complete homology, J-completion, stable homology, Tate
  homology}

\subjclass[2010]{16E30, 18E25}

\begin{abstract}
  We compare two generalizations of Tate homology to the realm of
  associative rings: stable homology and the J-completion of Tor, also
  known as complete homology. For finitely generated modules, we show
  that the two theories agree over Artin algebras and over commutative
  noetherian rings that are Gorenstein, or local and complete.
\end{abstract}

\maketitle

\thispagestyle{empty} \allowdisplaybreaks

\section*{Introduction}

\noindent
Tate introduced homology and cohomology theories over finite group
algebras. In unpublished work from the 1980s---eventually given an
exposition by Goichot~\cite{FGc92}---P.\,Vogel generalized both
theories to the realm of associative rings. These generalizations are
now referred to as \emph{stable (co)homology.} Tate's theories
achieved alternate generalizations through works of Triulzi and his
adviser Mislin who introduced the \emph{J-completion} of Tor
\cite{MTr-phd} and the \emph{P-completion} of covariant Ext
\cite{GMs94}. An \emph{I-completion} of contravariant Ext was studied
by Nucinkis \cite{BEN98}. On the cohomological side, the P-completion
of Ext agrees with stable cohomology; see Kropholler's survey
\cite{PHK95} and Appendix~B.

In this paper, we investigate the homological side: Under what
conditions does stable homology
$\smash{\widetilde{\operatorname{Tor}}}$ agree with the J-completion
$\smash{\widecheck{\operatorname{Tor}}}$ of Tor?  Either theory enjoys
properties that the other may not have: Complete homology has a
universal property, and stable homology is a homological functor. The
universal property of complete homology provides a comparison map from
stable homology to complete homology. Our main theorem identifies
conditions under which stable and complete homology agree. For some of
these conditions, we do not know if they also ensure that the comparison
map is an isomorphism; in fact, we ask in \pgref{question} if the
comparison map may, in some cases, fail to detect such agreement.

\begin{res*}[Main Theorem]
  Let $R$ be an Artin algebra, or a commutative Gorenstein ring, or a
  commutative complete local ring. For every finitely generated right
  $R$-module $M$ and all $i\in\ZZ$, there are isomorphisms
  \begin{equation*}
    \Ctor{i}{M}{-} \dis \Stor{i}{M}{-}
  \end{equation*}
  of functors on the category of finitely generated left $R$-modules.
\end{res*}

\noindent
By a commutative Gorenstein ring we mean a commutative noetherian ring
whose localization at every prime ideal is a Gorenstein local ring.  A
commutative local ring is tacitly assumed to be noetherian. The
isomorphisms in the Main Theorem hold for all modules over rings with
finite \emph{sfli} invariant; see \corref{sfli}. Among these rings are
von Neumann regular rings and Iwanaga-Gorenstein rings; the latter are
noetherian rings with finite injective dimension on either side. See
Colby \cite{RRC75} and Emmanouil and Talelli \cite{IEmOTl11} for
details.

Complete homology and stable homology are defined for all modules over
any associative ring, but we do not know if they agree in that
generality. For a restricted class of modules, Iacob~\cite{AIc07} has
generalized Tate homology to the setting of associative rings. We do
show that the necessary and sufficient conditions for stable homology
to agree with Tate homology are also necessary and sufficient for
complete homology to agree with Tate homology.  However, complete and
stable homology may agree even when Tate homology is not defined; see
\exaref[]{gor} and \exaref[]{vNr}.

\section{Injective completion of covariant functors} %%%%%%%%%%%%%%%%%
\label{sec:1}

\noindent
In this paper, rings are assumed to be associative algebras over a
fixed commutative ring $\kk$. Let $R$ be a ring; we adopt the
convention that an $R$-module is a left $R$-module, and we refer to
right $R$-modules as modules over the opposite ring $\Rop$.  The
functors we consider in this paper are tacitly assumed to be functors
from the category of $R$-modules to the category of $\kk$-modules.

\begin{ipg}
  Let $\iI$ be a non-empty subset of $\ZZ$.  If $\U =
  \setof{\U_{i}}{i\in \iI}$ and $\T = \setof{\T_{i}}{i\in \iI}$ are
  families of covariant functors, then by a \emph{morphism}
  $\mapdef{\upsilon}{\U}{\T}$ we mean a family
  $\setof{\mapdef{\upsilon_i}{\U_{i}}{\T_{i}}}{i\in \iI}$ of
  morphisms.
\end{ipg}

The next definition can be found in Triulzi's thesis
\cite[6.1.1]{MTr-phd}.  It is similar to the definitions of Mislin's
of P-completion of a covariant cohomological functor \cite[2.1]{GMs94}
and Nucinkis' I-completion of a contravariant cohomological
functor~\cite[2.4]{BEN98}.

\begin{dfn}
  \label{dfn:injcompletion}
  Let $\T = \setof{\T_{i}}{i\in \iI}$ be a family of covariant
  functors. The \emph{J-completion} of $\T$ is a family
  $\widecheck{\T} = \setof{\widecheck{\T}_{i}}{i\in \iI}$ of covariant
  functors together with a morphism
  $\mapdef{\tau}{\widecheck{\T}}{\T}$ that have the following
  properties.
  \begin{rqm}
  \item One has $\widecheck{\T}_{i}(E)=0$ for every injective
    $R$-module $E$ and every $i\in \iI$.
  \item If $\U = \setof{\U_{i}}{i\in \iI}$ is a family of covariant
    functors with $\U_{i}(E) = 0$ for every injective $R$-module $E$
    and all $i\in \iI$, and if $\mapdef{\upsilon}{\U}{\T}$ is a
    morphism, then there exists a unique morphism
    $\mapdef{\s}{\U}{\widecheck{\T}}$ such that $\tau\s = \upsilon$.
  \end{rqm}
\end{dfn}

It follows from part (2) above that the J-completion of $\T$ is, if it
exists, unique up to unique isomorphism.  To discuss existence we
recall the notions of homological functors and satellites, the latter
from Cartan and Eilenberg \chpcite[III]{careil}.

\begin{ipg}
  A family $\T = \setof{\T_{i}}{i\in\ZZ}$ of covariant additive
  functors is called a \emph{homological functor} if for every exact
  sequence $0 \to N' \to N \to N'' \to 0$ of $R$-modules, there is an
  exact sequence
  \begin{equation*}
    \cdots \lra \T_{i}(N') \lra \T_{i}(N) \lra \T_{i}(N'') 
    \xra{\delta_i} \T_{i-1}(N') \lra \cdots
  \end{equation*}
  of $\kk$-modules with natural connecting homomorphisms
  $\delta_i$. For example, the family $\Tor{}{M}{-} =
  \setof{\Tor{i}{M}{-}}{i\in\ZZ}$ is such a functor for every
  $\Rop$-module $M$.
\end{ipg}

\begin{ipg}
  \label{ipg:satellite}
  Let $N$ be an $R$-module and $N \qra I$ be an injective
  resolution. For every $n\ge 0$ an $n^\mathrm{th}$ \emph{cosyzygy} of
  $N$ is $\Ker{(I^{n} \to I^{n+1})}$; up to isomorphism and injective
  summands, it is independent of the choice of injective
  resolution. If the resolution is minimal, then we use the notation
  $\Omega^nN$ for the cosyzygy $\Ker{(I^{n} \to I^{n+1})}$; these
  modules are unique up to isomorphism. Notice the isomorphism
  $\Omega^0 N \is N$.

  Let $\T$ be a covariant functor.  For $n\ge 0$ the $n^\mathrm{th}$
  \emph{right satellite} of $\T$ is a functor, denoted $\sa^n\T$,
  whose value at an $R$-module $N$ is the cokernel of the homomorphism
  $\T(I^{n-1}) \to \T(\Omega^{n}N)$.  Notice that one has $\sa^{0}\T
  \is \T$ and \mbox{$\sa^{k+1}\T(N) \is \sa^1\T(\Omega^kN)$}.
\end{ipg}

\begin{ipg}
  \label{ipg:exist}
  Following Mislin's line of proof of \thmcite[2.2]{GMs94}, Triulzi
  \prpcite[6.1.2]{MTr-phd} shows that every homological functor $\T =
  \setof{\T_{i}}{i\in\ZZ}$ has a J-completion
  $\mapdef{\tau}{\widecheck{\T}}{\T}$ with $\widecheck{\T}_{i}(N) =
  \lim_{k\ge 0}\sa^{k}\T_{k+i}(N)$ for every $R$-module $N$ and every
  $i\in\ZZ$.
\end{ipg}

\begin{rmk}
  \label{rmk:directed sytem}
  Let $\T = \setof{\T_{i}}{i\in\ZZ}$ be a homological functor. Fix $i
  \in \ZZ$; every exact sequence $0 \to \Omega^{k-1} N \to I^{k-1} \to
  \Omega^{k} N \to 0$ yields an exact sequence
  \begin{equation*}
    \T_{k+i}(I^{k-1}) \lra \T_{k+i}(\Omega^{k}N) \xra{\delta_{k+i}}
    \T_{k-1+i}(\Omega^{k-1} N)\:.
  \end{equation*}
  The connecting homomorphism $\delta_{k+i}$ induces a homomorphism
  from the cokernel $\sa^{k} \T_{k+i}(N)$ to $\T_{k-1+i}(\Omega^{k-1}
  N)$.  Composed with the natural projection onto
  $\sa^{k-1}\T_{k-1+i}(N)$ it yields a homomorphism
  $\sa^{k}\T_{k+i}(N) \to \sa^{k-1}\T_{k-1+i}(N)$.  These
  homomorphisms provide the maps in the inverse system from
  \ref{ipg:exist}.
\end{rmk}

The next result shows how to compute the J-completion directly from
cosyzygies.

\begin{lem}
  \label{lem:iso}
  Let $\T = \setof{\T_{i}}{i\in\ZZ}$ be a homological functor and let
  $N$ be an $R$-module. For every $i\in\ZZ$ there is an isomorphism
  \begin{equation*}
    \widecheck{\T}_{i}(N) \dis \lim_{k\ge 0}\T_{k+i}(\Omega^{k}N)\:.
  \end{equation*}
\end{lem}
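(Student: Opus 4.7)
The plan is to begin from the description of the J\nobreakdash-completion recorded in \ref{ipg:exist}, namely $\widecheck{\T}_{i}(N) = \lim_{k\ge 0}\sa^{k}\T_{k+i}(N)$, and to show that this limit agrees with $\lim_{k\ge 0}\T_{k+i}(\Omega^{k}N)$. Write $A_{k} = \T_{k+i}(\Omega^{k}N)$ and $B_{k} = \sa^{k}\T_{k+i}(N)$; let $p_{k}\colon A_{k}\twoheadrightarrow B_{k}$ denote the cokernel projection that defines $B_{k}$, and let $\delta_{k+i}\colon A_{k}\to A_{k-1}$ denote the connecting map associated to $0\to\Omega^{k-1}N\to I^{k-1}\to\Omega^{k}N\to 0$.

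First I would unpack $\delta_{k+i}$. For every $k\ge 1$ the long exact sequence contains the segment
\begin{equation*}
  \T_{k+i}(I^{k-1}) \lra A_{k} \xra{\delta_{k+i}} A_{k-1} \lra \T_{k-1+i}(I^{k-1}).
\end{equation*}
Exactness at $A_{k}$ identifies $\ker\delta_{k+i}$ with the image of the first arrow, which is precisely $\ker p_{k}$. Thus $\delta_{k+i}$ factors uniquely as $\delta_{k+i} = j_{k}\circ p_{k}$ for some injection $j_{k}\colon B_{k}\hookrightarrow A_{k-1}$; moreover, a direct unpacking of Remark \ref{rmk:directed sytem} shows that the connecting map $B_{k}\to B_{k-1}$ used there is exactly the composite $p_{k-1}\circ j_{k}$.

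Next I would assemble the $p_{k}$'s and $j_{k}$'s into a single interleaved inverse system
\begin{equation*}
  \cdots \lra A_{k+1} \xra{p_{k+1}} B_{k+1} \xra{j_{k+1}} A_{k} \xra{p_{k}} B_{k} \xra{j_{k}} A_{k-1} \lra \cdots \lra A_{0}.
\end{equation*}
Composing consecutive pairs of arrows recovers the system $(A_{k},\delta_{k+i})$ when restricted to the $A$\nobreakdash-terms, and the system from Remark \ref{rmk:directed sytem} when restricted to the $B$\nobreakdash-terms. Since each of these two subsequences is cofinal in the interleaved sequence, each restriction induces an isomorphism on inverse limits. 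Composing the two restrictions yields an isomorphism $\lim_{k}A_{k}\xra{\is}\lim_{k}B_{k}$ which is, by construction, the map induced componentwise by the $p_{k}$'s. In view of \ref{ipg:exist}, this is the asserted isomorphism.

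There is no substantive obstacle: the factorization $\delta_{k+i}=j_{k}\circ p_{k}$ is pure exactness, and the equality of limits is the standard cofinality fact for inverse systems over $\mathbb{N}$. The only point requiring attention is the identification of the map of Remark \ref{rmk:directed sytem} with $p_{k-1}\circ j_{k}$, which is a matter of tracing through its definition.
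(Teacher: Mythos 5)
Your proof is correct and rests on the same key decomposition as the paper's: the factorization $\delta_{k+i}=j_k\circ p_k$ with $p_k$ the cokernel projection onto $\sa^k\T_{k+i}(N)$ and $j_k$ the induced monomorphism into $\T_{k+i-1}(\Omega^{k-1}N)$, together with the identification of the satellite-system transition map with $p_{k-1}\circ j_k$. The only difference is the final step---the paper observes that $\lim_{k}\delta_{k+i}$ is the identity on $\lim_k\T_{k+i}(\Omega^k N)$, so $\lim_k j_k$ is surjective and, by left exactness of $\lim$, also injective---whereas your interleaving-and-cofinality packaging is an equally standard and equally valid way to deduce the same isomorphism.
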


\begin{proof}
  Let $N \qra I$ be a minimal injective resolution of $N$.  For each
  $k\ge 1$ consider the exact sequence $0 \to \Omega^{k-1}N \to
  I^{k-1} \to \Omega^{k}N \to 0$. As $\T$ is a homological functor,
  there is an exact sequence
  \begin{equation*}
    \cdots \to \T_{k+i}(I^{k-1}) \lra \T_{k+i}(\Omega^{k}N)
    \xra{\delta_{k+i}} \T_{k+i-1}(\Omega^{k-1}N) \lra
    \T_{k+i-1}(I^{k-1}) \to \cdots
  \end{equation*}
  Consider the commutative diagram
  \begin{equation*}
    \xymatrix@C=1.41pc{
      \T_{k+i}(\Omega^{k}N) \ar@{->>}[dr] \ar[rr]^-{\delta_{k+i}}
      && \T_{k+i-1}(\Omega^{k-1}N) \\
      & \sa^{k}\T_{k+i}(N)\:. \ar@{^(->}[ur]_-{\f^k}}
  \end{equation*}
  The connecting homomorphisms $\delta_{k+i}$ yield an inverse
  system. They also make up a morphism between copies of this inverse
  system, and its limit $\lim_{k\ge 0}\delta_{k+i}$ is, by the
  universal property of limits, the identity on $\lim_{k\ge
    0}\T_{k+i}(\Omega^{k}N)$. It follows that
  \begin{equation*}
    \dmapdef{\lim_{k\ge 0}\f^k}{\lim_{k\ge 0}\sa^{k}\T_{k+i}(N)}
    {\lim_{k\ge 0}\T_{k+i}(\Omega^{k}N)}
  \end{equation*} 
  is surjective, and it is injective as lim is left exact. Thus, one
  has
  \begin{equation*}
    \widecheck{\T}_{i}(N) \deq \lim_{k\ge 0}\sa^{k}\T_{k+i}(N) \dis \lim_{k\ge 0}
    \T_{k+i}(\Omega^{k}N)\:.\qedhere
  \end{equation*}
\end{proof}

It is not given that the J-completion of a homological functor is
itself a homological functor, but dimension shifting still works.

\begin{lem}
  \label{lem:dimshift}
  Let $\T = \setof{\T_{i}}{i\in\ZZ}$ be a homological functor and let
  $N$ be an $R$-module. For every $n \ge 0$, there is an isomorphism
  $\widecheck{\T}_i(\Omega^n N) \is \widecheck{\T}_{i-n}(N)$.
\end{lem}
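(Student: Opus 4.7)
The plan is to reduce the statement to Lemma~\pgref{lem:iso} and exploit the identification $\Omega^k(\Omega^n N)\is\Omega^{k+n}N$ together with a cofinality argument for inverse limits.

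First I would observe that if $0\to N\to I^0\to I^1\to\cdots$ is a minimal injective resolution of $N$, then the shifted complex $0\to\Omega^nN\to I^n\to I^{n+1}\to\cdots$ is a minimal injective resolution of $\Omega^nN$, because minimality of the tail is inherited from the original resolution. Consequently, for every $k\ge 0$ one obtains the identification
\begin{equation*}
  \Omega^k(\Omega^nN) \:=\: \Ker{(I^{n+k}\to I^{n+k+1})} \:=\: \Omega^{k+n}N.
\end{equation*}

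Applying \lemref{iso} to $\Omega^nN$ and $N$ respectively, the isomorphism to establish takes the form
\begin{equation*}
  \lim_{k\ge 0}\T_{k+i}(\Omega^{k+n}N) \dis \lim_{k\ge 0}\T_{k+i-n}(\Omega^kN).
\end{equation*}
The substitution $j=k+n$ turns the left-hand side into $\lim_{j\ge n}\T_{j+i-n}(\Omega^jN)$, and the connecting homomorphisms of \ref{rmk:directed sytem} transport along this reindexing to the connecting homomorphisms of the inverse system on the right, since both arise from the same exact sequences $0\to\Omega^{j-1}N\to I^{j-1}\to\Omega^jN\to 0$ appearing in the minimal injective resolution of $N$.

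Finally, because $\{j\ge n\}$ is cofinal in $\{j\ge 0\}$, the inverse limit over $\{j\ge n\}$ of the reindexed system agrees canonically with the inverse limit over $\{j\ge 0\}$, which is $\widecheck{\T}_{i-n}(N)$ by \lemref{iso}. I expect the only subtlety to be a careful verification that the transition maps on both sides match after the index shift, but this is a direct consequence of the naturality of the connecting homomorphisms $\delta_{k+i}$ in \ref{rmk:directed sytem} and requires no new input beyond the two observations above.
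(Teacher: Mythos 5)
Your proposal is correct and takes essentially the same approach as the paper: both rest on \lemref{iso}, the identification $\Omega^k(\Omega^n N)\is\Omega^{k+n}N$ coming from the truncated minimal resolution, a reindexing of the inverse system, and a cofinality observation. The only cosmetic difference is that the paper reduces to the case $n=1$ by induction whereas you carry out the reindexing for general $n$ in one step.
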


\begin{proof} 
  For $n=0$ the isomorphism is trivial, and by induction it is
  sufficient to prove it for $n=1$. The isomorphisms in the next
  computation hold by \lemref{iso}:
  \begin{align*}
    \widecheck{\T}_i(\Omega N) %%
    & \dis \lim_{k\ge 0} \T_{k+i}(\Omega^k(\Omega N))\\
    & \deq \lim_{k \ge 0} \T_{(k + 1) + (i - 1)}(\Omega^{k + 1} N) \\
    & \deq \lim_{k \ge 1} \T_{k + (i - 1)}(\Omega^k N) \\
    & \dis \widecheck{\T}_{i-1}(N)\:.\qedhere
  \end{align*}
\end{proof}

\begin{lem}
  \label{lem:induced isomorphism}
  Let $\mapdef[\lra]{\upsilon}{\setof{\U_{i}}{i\in\ZZ}}%
  {\setof{\T_{i}}{i\in\ZZ}}$ be a morphism of homological functors
  such that $\upsilon_i$ is an isomorphism for $i \gg 0$. If\,
  $\U_{i}(E)=0$ holds for each injective $R$-module $E$ and every
  $i\in\ZZ$, then the unique morphism that exists by
  \dfnref{injcompletion}, $\mapdef[\lra]{\s}{\setof{\U_{i}}{i\in\ZZ}}%
  {\setof{\widecheck{\T}_{i}}{i\in\ZZ}}$, is an isomorphism.
\end{lem}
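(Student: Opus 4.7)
The plan is to reduce everything to the concrete description of $\widecheck{\T}_i(N)$ given by \lemref{iso}, namely $\widecheck{\T}_i(N) \cong \lim_{k\ge 0}\T_{k+i}(\Omega^k N)$, and to exploit the vanishing of $\U$ on injectives to show that the analogous system for $\U$ is essentially constant.

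First I would establish that for every $i\in\ZZ$, every $k\ge 0$, and every $R$-module $N$, the iterated connecting homomorphism induces a natural isomorphism $\U_i(N) \cong \U_{k+i}(\Omega^k N)$. Indeed, applying the long exact sequence for the homological functor $\U$ to the short exact sequence $0\to \Omega^{k-1}N \to I^{k-1} \to \Omega^k N \to 0$ and using that $\U_{k+i}(I^{k-1}) = 0 = \U_{k+i-1}(I^{k-1})$ forces the connecting map $\U_{k+i}(\Omega^k N) \to \U_{k+i-1}(\Omega^{k-1}N)$ to be an isomorphism; then I would iterate. By the naturality of $\upsilon$ with respect to connecting homomorphisms, these isomorphisms are compatible with $\upsilon_{k+i}\colon \U_{k+i}(\Omega^k N) \to \T_{k+i}(\Omega^k N)$ and the transition maps of the inverse system considered in \rmkref{directed sytem} and \lemref{iso}.

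Next I would construct the map $\s_i$ explicitly. For each $k\ge 0$ form the composite $\U_i(N) \cong \U_{k+i}(\Omega^k N) \xra{\upsilon_{k+i}} \T_{k+i}(\Omega^k N)$; by the compatibility just noted, these assemble into a morphism of inverse systems from the constant system on $\U_i(N)$ into $\{\T_{k+i}(\Omega^k N)\}_{k\ge 0}$, and passage to the limit yields a map $\U_i(N) \to \widecheck{\T}_i(N)$. Its composition with $\tau$ is, at the $k=0$ stage, precisely $\upsilon_i$, so by the universal property in \dfnref{injcompletion} this map agrees with $\s_i$.

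Finally I would argue that $\s_i$ is an isomorphism. Choose $k_0$ so large that $\upsilon_{k+i}$ is an isomorphism for all $k\ge k_0$. Then the commutative squares relating $\upsilon$ to the connecting maps, combined with the isomorphisms established in the first step for $\U$, force the transition maps in $\{\T_{k+i}(\Omega^k N)\}_{k\ge k_0}$ to be isomorphisms as well. Hence the projection $\lim_{k\ge 0}\T_{k+i}(\Omega^k N) \to \T_{k_0+i}(\Omega^{k_0}N)$ is an isomorphism, and composing with the isomorphism $\upsilon_{k_0+i}^{-1}$ and the dimension-shifting isomorphism $\U_i(N) \cong \U_{k_0+i}(\Omega^{k_0}N)$ exhibits an inverse for $\s_i$. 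The one non-formal step, which I expect to be the main bookkeeping obstacle, is checking carefully that the construction of $\s_i$ above really is the unique map from the definition of J-completion; but since both $\s_i$ and the constructed map satisfy $\tau\s = \upsilon$, uniqueness in \dfnref{injcompletion} settles it.
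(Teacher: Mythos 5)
Your proof is correct and lands on the same underlying mechanism as the paper's: both rest on the limit description $\widecheck{\T}_i(N)\cong\lim_{k\ge 0}\T_{k+i}(\Omega^kN)$ from \lemref{iso}, the dimension-shift for $\U$ forced by its vanishing on injectives, and the stabilization of the inverse system once $\upsilon$ becomes an isomorphism. The route differs in emphasis: the paper first recognizes $\setof{\U_i}{i\in\iI}$ abstractly as the J-completion of $\setof{\T_i}{i\in\iI}$ on the stable index set by the uniqueness clause of \dfnref{injcompletion}, and then dimension-shifts down via \lemref{dimshift}; you instead build the comparison map $\s_i$ by hand as a map of inverse systems and exhibit an explicit inverse by identifying the eventually-constant tail of the $\T$-system. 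Your version buys a bit of extra rigor at two points the paper glosses over: you actually verify that the isomorphism you construct satisfies $\tau\s=\upsilon$ and hence \emph{is} the map $\s$ from the universal property (the paper's chain of isomorphisms is not explicitly matched against $\s$), and you sidestep the mild bookkeeping of comparing J-completions over the index set $\iI$ versus over all of $\ZZ$. The only thing worth double-checking in your argument is the index shift in the last step: the transition map $\T_{k+i}(\Omega^kN)\to\T_{k+i-1}(\Omega^{k-1}N)$ is sandwiched between $\upsilon_{k+i}$ and $\upsilon_{k+i-1}$, so it is an isomorphism for $k\ge k_0+1$ (not $k\ge k_0$); this is harmless since the limit over $k\ge k_0$ is unaffected, but it is the kind of off-by-one that is easy to mis-state.
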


\begin{proof}
  Set $\iI = \setof{i \in \ZZ}{\upsilon_i\colon U_i \to \T_i\ \text{is
      an isomorphism}}$.  By assumption $\iI$ contains all integers
  sufficiently large. Since $\U $ vanishes on injective modules, it
  follows that $\setof{\U_i}{i \in \iI}$ is the J-completion of
  $\setof{\T_i}{i \in \iI }$, i.e., $\U_i \is \widecheck{\T}_i$ for
  all $i \in \iI$; see Definition \ref{dfn:injcompletion}.

  Now let $j\in\ZZ$, choose an $i \in\iI$ with $i \ge j$, and set
  $n=i-j$. For every $R$-module $N$ there are isomorphisms
  \begin{align*}
    \widecheck{\T}_j(N) \deq \widecheck{\T}_{i - n}(N)
    &\dis \widecheck{\T}_i(\Omega^n N)\\
    &\dis \U_i(\Omega^n N)\\
    &\dis \U_{i-n}(N) \deq \U_{j}(N)\:.
  \end{align*}
  The first isomorphism holds by \lemref{dimshift}, and the second
  holds by the argument above as $i$ is in $\iI$. The last isomorphism
  follows by dimension shifting, as $\U$ is a homological functor that
  vanishes on injective modules.
\end{proof}

\section{Comparison to stable homology} %%%%%%%%%%%%%%%%%%%%%%%%%%%%%%
\label{sec:2}

We now focus on the J-completion of the homological functor
$\Tor{}{M}{-}$.

\begin{dfn}
  \label{dfn:complete}
  Let $M$ be an $\Rop$-module; for the J-completion of the homological
  functor $\Tor{}{M}{-} = \setof{\Tor{i}{M}{-}}{i\in\ZZ}$, see
  \dfnref{injcompletion}, we use the notation $\Ctor{}{M}{-} =
  \setof{\Ctor{i}{M}{-}}{i\in\ZZ}$. For every $R$-module $N$, the
  $\ZZ$-indexed family of $\kk$-modules $\Ctor{i}{M}{N}$ is called the
  \emph{complete homology} of $M$ and $N$ over $R$.
\end{dfn}

\begin{ipg}
  \label{ipg:vanishing}
  If $M$ has finite flat dimension, or if $N$ has finite injective
  dimension, then the complete homology $\Ctor{}{M}{N}$ vanishes: this
  follows straight from the isomorphisms $\Ctor{i}{M}{N} \is
  \lim_{k\ge 0} \Tor{k+i}{M}{\Omega^{k}N}$ from \lemref{iso}.
\end{ipg}

The goal of this section is to compare complete homology to stable
homology; we recall the definition of stable homology and refer to
\seccite[2]{CCLP} or \cite{FGc92} for details.

\begin{ipg}
  \label{ipg:complete}
  Let $M$ be an $\Rop$-module and $N$ be an $R$-module. Let $P \qra M$
  be a projective resolution and $N \qra I$ be an injective
  resolution. The tensor product complex $\tp{P}{I}$ has components
  $\tpp{P}{I}_n = \coprod_{i\in\ZZ}\tp{P_{i+n}}{I^{i}}$; it is a
  subcomplex of $\btp{P}{I}$ with components $\btpp{P}{I}_n =
  \prod_{i\in\ZZ}\tp{P_{i+n}}{I^{i}}$ and differential extended by
  linearity from $p\ot i \mapsto \partial(p)\ot i +
  (-1)^{|p|}p\ot\partial(i)$, where $|p|$ is the (homological) degree
  of $p$. The quotient complex is denoted $\ttp{P}{I}$, and the
  modules
  \begin{equation*}
    \Stor[R]{i}{M}{N} \deq \H[i+1]{\ttp{P}{I}}
  \end{equation*}
  make up the \emph{stable homology} of $M$ and $N$ over $R$. They fit
  in a long exact sequence with homology modules $\Tor{i}{M}{N}$ and
  $\bTor{i}{M}{N} = \H[i]{\btp{P}{I}}$; see~\cite[2.5]{CCLP}.
\end{ipg}

\subsection*{\em Comparison via the universal property}
Let $M$ be an $\Rop$-module. Stable homology $\Stor{}{M}{-}$ is a
homological functor, and there is a morphism
\begin{equation*}
  \dmapdef{\eth}{\Stor{}{M}{-}} {\Tor{}{M}{-}}
\end{equation*}
given by connecting maps in a long exact sequence; see
\cite[2.5]{CCLP}.  Stable homology $\Stor{}{M}{E}$ vanishes for every
injective $R$-module $E$; see \cite[2.3]{CCLP}. By the universal
property of complete homology, there is thus a morphism
\begin{equation*}
  \dmapdef{\s}{\Stor{}{M}{-}}{\Ctor{}{M}{-}} 
\end{equation*}
with $\tau\s = \eth$ where
$\mapdef{\tau}{\Ctor{}{M}{-}}{\Tor{}{M}{-}}$ is as in
\dfnref{injcompletion}.

We aim for an explicit description of the morphisms $\tau$, $\eth$,
and $\s$. To that end we first reinterpret the computation of complete
homology in \lemref{iso}.

\begin{ipg}
  \label{ipg:lim}
  Fix $i\in\ZZ$. Recall from the proof of \lemref{iso} that the maps
  in the inverse system on the right-hand side of $\Ctor{i}{M}{N} \is
  \lim_{k\ge 0}\Tor{k+i}{M}{\Omega^{k}N}$ come from the connecting
  homomorphisms
  \begin{equation}
    \label{eq:0}
    \dmapdef{\delta_{k+i}}{\Tor{k+i}{M}{\Omega^{k}N}}{\Tor{k+i-1}{M}{\Omega^{k-1}N}}\:.
  \end{equation}
  Let $P \qra M$ be a projective resolution and let $N\qra I$ be a
  minimal injective resolution. There is an exact sequence $0 \to
  I^{\ge k} \to I^{\ge k-1} \to \susp^{-(k-1)}I^{k-1}\to 0$, for every
  $k\ge 1$, which induces an exact sequence
  \begin{equation}
    \label{eq:1}
    0 \lra
    \tp{P}{\susp^{k}I^{\ge k}} \lra \tp{P}{\susp^{k}I^{\ge k-1}} \lra
    \tp{P}{\susp I^{k-1}} \lra 0\:.
  \end{equation}
  For every $n\ge 0$ the canonical map $\Omega^nN \to \susp^nI^{\ge
    n}$ is a minimal injective resolution. It follows that the exact
  sequence in homology associated to \eqref{1} yields homomorphisms
  from
  \begin{alignat*}{2}
    \H[k+i]{\tp{P}{\susp^{k}I^{\ge k}}} &\deq \Tor{k+i}{M}{\Omega^{k}N}\qquad\text{to}\\
    \H[k+i]{\tp{P}{\susp^{k}I^{\ge k-1}}} &\deq
    \H[k+i-1]{\tp{P}{\susp^{k-1}I^{\ge k-1}}}
    =\Tor{k+i-1}{M}{\Omega^{k-1}N}\:.
  \end{alignat*}
  That these maps yield an inverse system isomorphic to the one given
  by the maps \eqref{0} is due to the diagram
  \begin{equation*}
    \xymatrix{
      \H[k+i]{\tp{P}{\susp^k\tha{k}{I}}} \ar[r] \ar[d] ^-\is
      & \H[k+i-1]{\tp{P}{\susp^{k-1}\tha{k-1}{I}}} \ar[d]^-\is \\
      \Tor{k+i}{M}{\Omega^{k}N} \ar[r]^-{\delta_{k+i}}
      & \Tor{k+i-1}{M}{\Omega^{k-1}N}
    }
  \end{equation*}
  where the top horizontal map is induced by the embedding $I^{\ge k}
  \to I^{\ge k-1}$.  The diagram is commutative because $\tp{P}{-}$ is
  a triangulated functor on the derived category, and $\H{-}$ is a
  homological functor on the derived category. This explains the
  second isomorphism in the chain
  \begin{equation}
    \label{eq:2}
    \begin{split}
      \Ctor{i}{M}{N} %%
      &\dis \lim_{k\ge 0}\Tor{k+i}{M}{\Omega^{k}N} \\
      &\dis \lim_{k\ge 0}\H[k+i]{\tp{P}{\susp^{k}\tha{k}{I}}} \dis
      \lim_{k\ge 0}\H[i]{\tp{P}{\tha{k}{I}}}
    \end{split}
  \end{equation}
  where the limits in the last line are taken over the system given by
  the maps
  \begin{equation}
    \label{eq:3}
    \dmapdef{\iota^k}{\H[i]{\tp{P}{\tha{k}{I}}}}{\H[i]{\tp{P}{\tha{k-1}{I}}}}
  \end{equation}
  induced by the embeddings $\tha{k}{I} \to \tha{k-1}{I}$.
\end{ipg}

\begin{ipg}
  \label{ipg:tes}
  Adopt the notation from \ref{ipg:lim}. In view of
  \ref{ipg:lim}\eqref{2}, an element in complete homology
  $\Ctor{i}{M}{N}$ is a sequence of homology classes $([w^{\ge k}])_{k
    \ge 0}$ where each $w^{\ge k}$ is a cycle in $\tpp{P}{I^{\ge
      k}}_i$, and one has $\iota^k([w^{\ge k}]) = [w^{\ge k-1}]$. With
  this notation,
  \begin{equation*}
    \dmapdef{\tau_i}{\Ctor{i}{M}{N}}{\Tor{i}{M}{N}}\quad\text{is
      given by}\quad ([w^{\ge k}])_{k \ge 0} \longmapsto [w^{\ge 0}]\:;
  \end{equation*}
  cf.~\cite[proof of 1.2.2 and text after 6.1.2]{MTr-phd}.

  A homogeneous cycle $z$ in $\ttp{P}{I}$ of homological degree $|z|$
  is represented by a family $(z^j)_{j\ge 0}$ with $z^j \in
  \tp{P_{j+|z|}}{I^j}$, such that $\partial(z)$ belongs to
  $\tpp{P}{I}_{|z|-1}$. That is, $z$ may have infinite support and
  need not be a cycle in $\btp{P}{I}$, but $\partial(z)$ has finite
  support, and by the definition of the differential, $\partial(z)$ is
  a cycle in $\tp{P}{I}$.  Thus, an element in $\Stor{i}{M}{N}$ is the
  class $[z]$ of a cycle in $\ttpp{P}{I}_{i+1}$ with $\partial(z)$ a
  cycle in $\tpp{P}{I}_i$, and with this notation the connecting
  homomorphism
  \begin{equation*}
    \dmapdef{\eth_i}{\Stor{i}{M}{N}}{\Tor{i}{M}{N}}\quad\text{is
      given by}\quad [z] \longmapsto [\partial(z)]\:.
  \end{equation*}

  Let $z=(z^j)_{j\ge 0}$ be a cycle in $\ttpp{P}{I}_{i+1}$; for every
  $k\ge 0$ set $z^{\ge k} = (z^j)_{j\ge k}$. We just saw that
  $\partial(z)$ has finite support, so for every $k\ge 0$ the boundary
  $\partial(z^{\ge k})$ is a cycle in $\tpp{P}{I^{\ge k}}_i$.  In
  $\tp{P}{I^{\ge k-1}}$ the difference $\partial(z^{\ge k-1})
  - \partial(z^{\ge k}) = \partial(z^{k-1})$ is a boundary, so the
  sequence of homology classes $([\partial(z^{\ge k})])_{k\ge 0}$ is
  compatible with the morphisms $\iota^k$ from
  \ref{ipg:lim}\eqref{3}. To see that the homomorphism
  \begin{equation*}
    \dmapdef{\s_i}{\Stor{i}{M}{N}}{\Ctor{i}{M}{N}}\quad\text{given by}
    \quad [z] \longmapsto ([\partial(z^{\ge k})])_{k\ge 0}
  \end{equation*}
  provides the desired factorization, note that for $[z]$ in stable
  homology $\Stor{i}{M}{N}$ one has $\tau_i\s_i([z]) =
  \tau_i(([\partial(z^{\ge k})])_{k\ge 0}) = [\partial(z^{\ge 0})] =
  [\partial(z)] = \eth_i([z])$.
\end{ipg}

The comparison map $\s$ is always surjective. Triulzi
\corcite[6.2.10]{MTr-phd} proves this in the case of group algebras,
and Russell \prpcite[58]{JRs-phd} has the general case. As neither
thesis is publicly available, we include a proof of surjectivity in
Appendix~A.

\begin{ipg}
  \label{ipg:fcfd}
  An $\Rop$-module $M$ is said to have \emph{finite copure flat
    dimension} if there exists an integer $n$ such that $\Tor{i}{M}{E}
  = 0$ for every injective $R$-module $E$ and all $i \ge n$; see
  Enochs and Jenda \cite{EEnOJn93a}.  Prominent examples of such
  modules are those of finite Gorenstein flat dimension; see
  \thmcite[4.14]{CFH-11}.
\end{ipg}

\begin{prp}
  \label{prp:copure}
  For an $\Rop$-module $M$ of finite copure flat dimension and for all
  $i\in\ZZ$ there are isomorphisms of functors on the category of
  $R$-modules
  \begin{equation*}
    \Ctor{i}{M}{-} \dis \Stor{i}{M}{-}\:.
  \end{equation*}
\end{prp}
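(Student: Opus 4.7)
My plan is to invoke \lemref{induced isomorphism} with $\U = \Stor{}{M}{-}$, $\T = \Tor{}{M}{-}$, and $\upsilon = \eth$.  Stable homology is a homological functor that vanishes on every injective $R$-module (both facts recalled above), so the lemma will produce an isomorphism $\Stor{}{M}{-} \is \Ctor{}{M}{-}$ in every degree; by the uniqueness built into \dfnref{injcompletion}, the isomorphism it yields is precisely the comparison map $\s$ constructed in \ref{ipg:tes}.  It only remains to verify that $\eth_i$ is an isomorphism for all $i \gg 0$.

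The short exact sequence $0 \to \tp{P}{I} \to \btp{P}{I} \to \ttp{P}{I} \to 0$ from \ref{ipg:complete} gives rise to the long exact sequence
\begin{equation*}
  \cdots \lra \bTor{i+1}{M}{N} \lra \Stor{i}{M}{N} \xra{\eth_i} \Tor{i}{M}{N} \lra \bTor{i}{M}{N} \lra \cdots\,,
\end{equation*}
so it suffices to show that $\bTor{i}{M}{N}=0$ for $i \gg 0$.  Pick an integer $d$ with $\Tor{j}{M}{E}=0$ for every injective $R$-module $E$ and every $j\ge d$.  I would compute $\bTor{}{M}{N}$ via the column-wise spectral sequence of the bicomplex $X^{p,q} = \tp{P_q}{I^p}$ with $p,q\ge 0$, whose product totalization is exactly $\btp{P}{I}$.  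The column filtration $F^k = \prod_{p\ge k}(\text{column }p)$ is a decreasing filtration by subcomplexes that is separated, exhaustive, and complete, so the spectral sequence converges strongly.  Its $E_1$-term is $E_1^{p,q} = \Tor{q}{M}{I^p}$, which vanishes for $q\ge d$ because $I^p$ is injective.  Hence every page is supported in the finite horizontal strip $0\le q<d$, each total degree houses only finitely many potentially nonzero spots, and the differentials eventually vanish.  In total degree $i\ge d$ no nonzero spot is possible, since $q<d\le i$ would force $p=q-i<0$; therefore $\bTor{i}{M}{N}=0$ for $i\ge d$.

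Consequently $\eth_i$ is an isomorphism for every $i\ge d$, and \lemref{induced isomorphism} completes the argument.  The only genuine delicacy is the strong convergence of the product-total spectral sequence, but this is painless here because finite copure flat dimension collapses the $E_1$-page into a strip of finite height, so the $E_r$-page stabilizes after finitely many steps in every total degree.
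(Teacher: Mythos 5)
Your proof is correct, and its skeleton is the same as the paper's: verify the hypotheses of \lemref{induced isomorphism} with $\U=\Stor{}{M}{-}$, $\T=\Tor{}{M}{-}$, and $\upsilon=\eth$, then conclude. The difference is entirely in how you justify that $\eth_i$ is an isomorphism for $i\gg 0$. The paper delegates this to \prpcite[2.9]{CCLP} from the companion paper; you unpack that citation by appealing to the long exact sequence relating $\Stor{}{M}{N}$, $\Tor{}{M}{N}$, and $\bTor{}{M}{N}$ (from \ref{ipg:complete}) and then killing $\bTor{i}{M}{N}$ for $i\ge d$ with the column-filtration spectral sequence of the product totalization $\btp{P}{I}$. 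Your handling of the genuine subtlety---strong convergence of a product-total spectral sequence---is sound: finite copure flat dimension confines the $E_1$-page to the strip $0\le q<d$, so each total degree hosts only finitely many potentially nonzero entries, every $E_r^{p,q}$ stabilizes after finitely many steps, and together with completeness and Hausdorffness of the filtration this forces strong convergence (Boardman's complete convergence theorem, or the regularity criterion). The net effect is a more self-contained version of the same argument; what you gain in transparency you pay for in the extra spectral-sequence bookkeeping that the paper compresses into a single citation.
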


\begin{proof}
  By assumption there is an integer $n$ such that $\Tor{i}{M}{E}=0$
  for every injective $R$-module $E$ and all $i \ge n$. Therefore
  \prpcite[2.9]{CCLP} yields $\Stor{i}{M}{-} \is \Tor{i}{M}{-}$ for
  all $i \ge n$. Now \cite[2.3]{CCLP} and \lemref{induced isomorphism}
  finish the proof.
\end{proof}

\begin{rmk}
  Stable homology $\Stor{}{M}{-}$ is a homological functor; see
  \cite[(2.4.2)]{CCLP}. Thus, for an $\Rop$-module $M$ of finite
  copure flat dimension, the Proposition establishes $\Ctor{}{M}{-}$
  as a homological functor.  In the proof, the isomorphisms one gets
  from \prpcite[2.9]{CCLP} are $\eth_{i}$ for $i\ge n$, hence the
  isomorphisms in the statement of \prpref{copure} are, in fact, the
  maps $\s_i$ discussed in \ref{ipg:tes}; cf.~\dfnref{injcompletion}.
\end{rmk}

\begin{ipg}
  \label{ipg:Gor result}
  Finitely generated modules over commutative Gorenstein rings have
  finite Gorenstein flat dimension: this is a result due to Goto; see
  \thmcite[1.18, prop.~4.24]{CFH-11}.  Thus Proposition
  \ref{prp:copure} establishes the Main Theorem in the case of
  Gorenstein rings.
\end{ipg}

\begin{ipg}
  \label{ipg:sfli}
  The invariant $\sfli R$ is the supremum of the flat dimensions of
  all injective $R$-modules.  Iwanaga-Gorenstein rings have finite
  sfli by \thmcite[9.1.7]{rha}.

  Over a ring $R$ with finite sfli, all $\Rop$-modules have finite
  copure flat dimension. Hence the next result is immediate from
  Proposition \ref{prp:copure}.
\end{ipg}

\begin{cor}
  \label{cor:sfli}
  Let $R$ be a ring with $\sfli R$ finite. For every $\Rop$-module $M$
  and all $i\in\ZZ$, there are isomorphisms $\Ctor{i}{M}{-} \is
  \Stor{i}{M}{-}$ of functors on the category of $R$-modules.\qed
\end{cor}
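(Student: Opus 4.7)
The plan is to reduce the corollary directly to \prpref{copure} by checking that its hypothesis is satisfied, uniformly, for every $\Rop$-module under the standing assumption that $\sfli R$ is finite. Writing $n = \sfli R$, the defining property from \ref{ipg:sfli} is that every injective $R$-module $E$ has flat dimension at most $n$; equivalently, each such $E$ admits a flat resolution of length at most $n$.

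With this in hand I would argue as follows. Since Tor can be computed from a flat resolution of the second argument, the bound on the flat dimension of injective modules yields $\Tor{i}{M}{E}=0$ for every $\Rop$-module $M$, every injective $R$-module $E$, and all $i>n$. In the terminology of \ref{ipg:fcfd}, this says that every $\Rop$-module has finite copure flat dimension, with the same bound $n$ working uniformly in $M$. Thus \prpref{copure} applies to every $\Rop$-module $M$, and the isomorphisms $\Ctor{i}{M}{-} \is \Stor{i}{M}{-}$ of functors on $R$-modules follow for all $i\in\ZZ$.

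There is no genuine obstacle: the corollary is, as advertised in \ref{ipg:sfli}, an immediate consequence of \prpref{copure} once one observes that a finite value of $\sfli R$ is exactly what is needed to push copure flat dimension from an individual module hypothesis (as in \prpref{copure}) to a property shared by \emph{all} $\Rop$-modules. The only minor point to keep in mind is that we invoke the standard fact that Tor is balanced, so that a flat resolution of the injective argument $E$ really does compute $\Tor{i}{M}{E}$ and hence forces its vanishing for $i>n$.
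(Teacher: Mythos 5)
Your argument is exactly the paper's: the corollary is marked with a \qed because the preceding paragraph (\ref{ipg:sfli}) already observes that finite $\sfli R$ forces every $\Rop$-module to have finite copure flat dimension, whence \prpref{copure} applies. You merely spell out the (correct) intermediate step that a uniform bound on the flat dimension of injectives yields $\Tor{i}{M}{E}=0$ for $i>n$ via a flat resolution of $E$, which the paper leaves implicit.
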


\begin{cor}
  \label{cor:noetherian}
  Let $R$ be a right Noetherian ring and let $M$ be a finitely
  generated $\Rop$-module. If\, $\Ext[\Rop]{i}{M}{R}=0$ holds for all
  $i\gg 0$, then for all $i\in\ZZ$ there are isomorphisms
  $\Ctor{i}{M}{-} \is \Stor{i}{M}{-}$ of functors on the category of
  $R$-modules.
\end{cor}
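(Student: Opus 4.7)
The plan is to reduce the statement to \prpref{copure} by verifying that, under the given hypotheses, $M$ has finite copure flat dimension in the sense of \ref{ipg:fcfd}; that is, $\Tor{i}{M}{E}=0$ for every injective $R$-module $E$ and all $i\gg 0$. Once this is in place, \prpref{copure} delivers the desired isomorphisms $\Ctor{i}{M}{-}\is\Stor{i}{M}{-}$ for every $i\in\ZZ$.

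The first step exploits the Noetherian assumption: since $M$ is finitely generated over a right Noetherian ring, it admits a resolution $P\to M$ in which each $P_j$ is a \emph{finitely generated} projective $\Rop$-module. For such a $P_j$ and any $R$-module $E$ I would establish the natural isomorphism
\[
  \tp{P_j}{E} \;\is\; \Hom{\Hom[\Rop]{P_j}{R}}{E},
\]
which is immediate for $P_j=R$ and extends to finitely generated projectives by additivity. Assembled across the degrees of $P$, this gives a natural isomorphism between the chain complex $\tp{P}{E}$ and $\Hom{\Hom[\Rop]{P}{R}}{E}$.

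The second step specializes to injective $E$. In that case $\Hom{-}{E}$ is exact and therefore commutes with passage to (co)homology, so combining this fact with the chain-level isomorphism of the previous step yields
\[
  \Tor{i}{M}{E} \;\is\; \Hom{\Ext[\Rop]{i}{M}{R}}{E}
\]
for every injective $R$-module $E$ and every $i\in\ZZ$. The hypothesis $\Ext[\Rop]{i}{M}{R}=0$ for $i\gg 0$ then forces $\Tor{i}{M}{E}=0$ for all such $i$ and every injective $E$, so $M$ has finite copure flat dimension and \prpref{copure} applies.

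I do not expect a genuine obstacle here: the entire argument is a transfer of the Ext vanishing hypothesis (against $R$) into Tor vanishing (against arbitrary injectives) via the isomorphism $\tp{P_j}{E}\is\Hom{\Hom[\Rop]{P_j}{R}}{E}$. That isomorphism, and hence the whole strategy, depends crucially on the finite generation of the projective modules $P_j$, which is precisely where the right Noetherian assumption on $R$ enters the proof.
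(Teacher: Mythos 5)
Your proof is correct and takes essentially the same route as the paper: both reduce to \prpref{copure} by establishing the isomorphism $\Tor{i}{M}{E} \is \Hom{\Ext[\Rop]{i}{M}{R}}{E}$ for $E$ injective, which the paper simply cites from Cartan--Eilenberg (prop.\ VI.5.3) and you re-derive via the chain-level identification $\tp{P}{E}\is\Hom{\Hom[\Rop]{P}{R}}{E}$ for a degree-wise finitely generated projective resolution $P$.
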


\begin{proof}
  Let $E$ be an injective $R$-module. Since $\Ext[\Rop]{i}{M}{R}=0$
  holds for all $i\gg 0$, the isomorphisms $\Tor{i}{M}{E} \is
  \Hom{\Ext[\Rop]{i}{M}{R}}{E}$ from \prpcite[VI.5.3]{careil} show
  that $M$ has finite copure flat dimension; see \ref{ipg:fcfd}. Thus
  the claim follows from \prpref{copure}.
\end{proof}

\subsection*{\em Comparison via duality}
With an exact functor $\Hom[]{-}{E}$ one can, loosely speaking, toggle
between Ext and Tor; the proof of \corref{noetherian} exemplifies
this.  To establish the Main Theorem for Artin algebras and
commutative complete local rings, we employ a duality $\Hom[]{-}{E}$
and use that the P-completion of covariant Ext agrees with stable
cohomology; see Appendix B for details.

\begin{thm}
  \label{thm:duality}
  Let $M$ be an $\Rop$-module with a degree-wise finitely generated
  projective resolution and let $E$ be an injective $\kk$-module. For
  all $i\in\ZZ$ there are isomorphisms of functors on the category of
  $\Rop$-modules
  \begin{equation*}
    \Ctor{i}{M}{\Hom[\kk]{-}{E}} \dis \Stor{i}{M}{\Hom[\kk]{-}{E}}\:.
  \end{equation*}
\end{thm}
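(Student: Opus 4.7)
The plan is to apply the exact contravariant duality $D = \Hom[\kk]{-}{E}$ in order to identify both sides with the $\kk$-dual of corresponding cohomological functors on $\Rop$-modules---the P-completion of $\Ext[\Rop]{}{M}{-}$ on the complete side and stable cohomology $\Text[\Rop]{}{M}{-}$ on the stable side. The Appendix~B identification of these two cohomological theories then yields the desired isomorphism.

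Fix a degree-wise finitely generated projective resolution $P \qra M$ and any projective resolution $Q \qra N$ of the $\Rop$-module $N$. Exactness of $D$, together with the fact that $D$ carries $\Rop$-projectives to $R$-injectives, shows that $DQ$ is an injective resolution of $DN$ whose $k$-th cosyzygy equals $D(\syz{k}{N})$. The Hom-evaluation isomorphism $\tp{P_j}{D(L)} \is D\Hom[\Rop]{P_j}{L}$, valid because each $P_j$ is finitely generated projective, extends to the level of complexes and, upon taking homology, yields the Cartan--Eilenberg duality $\Tor{n}{M}{D(L)} \is D\Ext[\Rop]{n}{M}{L}$, natural in the $\Rop$-module $L$; see \prpcite[VI.5.3]{careil}. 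The proof of \lemref{iso} works verbatim for the non-minimal resolution $DQ$, and the connecting maps in the resulting inverse system correspond under duality to the connecting maps of the direct system arising from the exact sequences $0 \to \syz{k}{N} \to Q_{k-1} \to \syz{k-1}{N} \to 0$. Hence
\[
  \Ctor{i}{M}{DN} \dis \lim_{k \ge 0} \Tor{k+i}{M}{D(\syz{k}{N})} \dis \lim_{k \ge 0} D\Ext[\Rop]{k+i}{M}{\syz{k}{N}} \dis D\bigl(\colim_{k \ge 0} \Ext[\Rop]{k+i}{M}{\syz{k}{N}}\bigr),
\]
and the final colimit is, by construction, the P-completion of $\Ext[\Rop]{}{M}{-}$ evaluated at $N$.

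A parallel argument handles the stable side. Applying Hom-evaluation bidegree-wise and using that $D$ exchanges $\coprod$ with $\prod$ produces an isomorphism $\btp{P}{DQ} \is DX$ of complexes, where $X_n = \coprod_{i} \Hom[\Rop]{P_{i+n}}{Q_i}$. Matching the subcomplex $\tp{P}{DQ}$ with its cohomological counterpart and passing to the quotient identifies $\ttp{P}{DQ}$ with $D$ of the complex defining stable cohomology; exactness of $D$ lets this identification descend to homology, yielding $\Stor{i}{M}{DN} \is D\Text[\Rop]{i}{M}{N}$. Combining this with the computation above and the Appendix~B isomorphism between the P-completion of $\Ext[\Rop]{}{M}{-}$ and $\Text[\Rop]{}{M}{-}$ produces the stated isomorphism, naturally in $N$. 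The principal obstacle is the complex-level duality on the stable side---keeping track of how $D$ interchanges $\coprod$ with $\prod$, and verifying compatibility with the subcomplex/quotient structure used to define both $\ttp{}{}$ and $\Text[\Rop]{}{M}{-}$---while a minor secondary point is that the non-minimal cosyzygies $D(\syz{k}{N})$ differ from the minimal $\Omega^k(DN)$ by harmless injective summands.
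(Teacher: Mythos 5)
Your overall strategy is the same as the paper's: dualize through $\Hom[\kk]{-}{E}$, identify the resulting limits/colimits with the corresponding cohomological theories over $\Rop$, and then invoke the Appendix~B agreement between stable cohomology and the P-completion of Ext. Your treatment of the complete side is sound and essentially reproduces what the paper does (the paper handles the non-minimality of $\Hom[\kk]{Q}{E}$ by splitting it as $I \oplus J$ with $J$ acyclic, which is the same point you note about cosyzygies differing by injective summands).

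The gap is on the stable side. The paper does not re-derive the isomorphism $\Stor{i}{M}{\Hom[\kk]{N}{E}} \is \Hom[\kk]{\Text[\Rop]{i}{M}{N}}{E}$; it cites \cite[thm.~A.7]{CCLP}. Your attempted re-derivation via complex-level duality fails as stated. You correctly observe that $\btp{P}{\Hom[\kk]{Q}{E}}$, whose $n$th component is the \emph{product} $\prod_j \tp{P_{j+n}}{\Hom[\kk]{Q_j}{E}}$, dualizes to the \emph{coproduct} totalization $\oHom{P}{Q}$ via $\prod_j \Hom[\kk]{H_j}{E} \is \Hom[\kk]{\coprod_j H_j}{E}$. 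But the analogous identification of $\tp{P}{\Hom[\kk]{Q}{E}}$ with $\Hom[\kk]{\Hom[\Rop]{P}{Q}}{E}$ does \emph{not} hold at the complex level: the $n$th component of the former is the coproduct $\coprod_j \Hom[\kk]{H_j}{E}$, whereas the dual of the product totalization has $n$th component $\Hom[\kk]{\prod_j H_j}{E}$, and the natural Hom-evaluation map $\coprod_j \Hom[\kk]{H_j}{E} \to \Hom[\kk]{\prod_j H_j}{E}$ is a proper monomorphism whenever infinitely many $H_j = \Hom[\Rop]{P_{j+n}}{Q_j}$ are nonzero --- which is the generic situation. Consequently $\ttp{P}{\Hom[\kk]{Q}{E}}$ is not isomorphic, degreewise, to the $\Hom[\kk]{-}{E}$-dual of the stable cohomology complex $\Hom[\Rop]{P}{Q}/\oHom{P}{Q}$, and "exactness of $D$ lets this identification descend to homology" is not available. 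What is true is that the Hom-evaluation map $\tp{P}{\Hom[\kk]{Q}{E}} \to \Hom[\kk]{\Hom[\Rop]{P}{Q}}{E}$ is a quasi-isomorphism (both compute $\Tor{}{M}{\Hom[\kk]{N}{E}}$), so one can compare the two short exact sequences of complexes as triangles and obtain a quasi-isomorphism on the third vertices, with the appropriate suspension. That cone-comparison argument, or an outright citation of \cite[thm.~A.7]{CCLP}, is what is needed here; the direct complex-level identification you assert does not exist.
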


\begin{proof}
  As the P-completion of covariant Ext agrees with stable cohomology,
  \thmcite[A.7]{CCLP} yields isomorphisms,
  \begin{equation*}
    \Stor{i}{M}{\Hom[\kk]{-}{E}} \dis \Hom[\kk]{\Text[\Rop]{i}{M}{-}}{E}\:.
  \end{equation*}
  Fix a degree-wise finitely generated projective resolution $P \qra
  M$. Let $N$ be a an $\Rop$-module and $Q \qra N$ be a projective
  resolution. The quasi-isomorphism $\Hom[\kk]{N}{E} \qra
  \Hom[\kk]{Q}{E}$ is an injective resolution over $R$, and
  $\Hom[\kk]{Q}{E}$ splits as a direct sum $I \oplus J$, where $I$
  provides a minimal injective resolution of $\Hom[\kk]{N}{E}$ and $J$
  is acyclic. Now one has:
  \begin{align*}
    \Hom[\kk]{\Text[\Rop]{i}{M}{N}}{E} %%
    &\dis \Hom[\kk]{\colim_{k\ge 0}\HH[i]{\Hom{P}{\Thb{k}{Q}}}}{E} \\
    &\dis \lim_{k\ge 0}\Hom[\kk]{\HH[i]{\Hom{P}{\Thb{k}{Q}}}}{E} \\
    &\dis \lim_{k\ge 0}\H[i]{\Hom[\kk]{\Hom{P}{\Thb{k}{Q}}}{E}} \\
    &\dis \lim_{k\ge 0}\H[i]{\Hom[\kk]{\Hom{P}{\susp^{k}\Omega_{k}N}}{E}} \\
    &\dis \lim_{k\ge 0}\H[i]{\tp{P}{\Hom[\kk]{\susp^{k}\Omega_{k}N}{E}}} \\
    &\dis \lim_{k\ge 0}\H[i]{\tp{P}{\Hom[\kk]{\Thb{k}{Q}}{E}}} \\
    &\dis \lim_{k\ge 0}\H[i]{\tp{P}{\tha{k}{(I\oplus J)}}} \\
    &\dis \Ctor{i}{M}{\Hom[\kk]{N}{E}}\:.
  \end{align*}
  The first isomorphism follows from \lemref{complete cohomology}. The
  fourth and the sixth isomorphisms hold since $\Thb{k}{Q} \qra
  \susp^{k}\Omega_{k}N$ is a projective resolution. The fifth
  isomorphism holds because $P$ is degree-wise finitely generated; see
  \prpcite[VI.5.2]{careil}. The last isomorphism follows from
  \ref{ipg:lim}(3) because the complex $\tp{P}{J}$ is acyclic. As the
  first isomorphism is natural in the $(N/Q)$ variable, and the
  subsequent isomorphisms are natural in $Q$, it follows that the
  total isomorphism is natural in $N$.
\end{proof}

The next corollary accounts for the case of Artin algebras in the Main
Theorem.

\begin{cor}
  \label{cor:artin}
  Let $R$ be an Artin algebra. For every finitely generated
  $\Rop$-module $M$ and all $i\in\ZZ$, there are isomorphisms
  $\Ctor{i}{M}{-} \dis \Stor{i}{M}{-}$ of functors on the category of
  finitely generated $R$-modules.
\end{cor}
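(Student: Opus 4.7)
The plan is to deduce the corollary directly from Theorem \ref{thm:duality} by invoking Matlis duality for Artin algebras. By definition, an Artin $\kk$-algebra $R$ is finitely generated as a module over a commutative Artinian ring $\kk$. Choose $E$ to be a minimal injective cogenerator of $\kk$, for example $E = \bigoplus_{\m} E_{\kk}(\kk/\m)$ where $\m$ ranges over the maximal ideals of $\kk$. The functor $D = \Hom[\kk]{-}{E}$ then restricts to a duality between the categories of finitely generated $R$-modules and finitely generated $\Rop$-modules, and the biduality morphism $N \to DDN$ is a natural isomorphism on finitely generated modules.

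First I would record that any finitely generated $\Rop$-module $M$ over an Artin algebra admits a degree-wise finitely generated projective resolution; this is immediate because $R$ is two-sided Noetherian (indeed Artinian) and $M$ is finitely presented. In particular, $M$ satisfies the hypothesis of Theorem \ref{thm:duality}. Next, for a finitely generated $R$-module $N$, set $N' = DN$, which is a finitely generated $\Rop$-module, and observe that $N \is \Hom[\kk]{N'}{E}$. Applying Theorem \ref{thm:duality} to the pair $(M, N')$ then yields the chain of isomorphisms
\begin{equation*}
  \Ctor{i}{M}{N} \dis \Ctor{i}{M}{\Hom[\kk]{N'}{E}} \dis \Stor{i}{M}{\Hom[\kk]{N'}{E}} \dis \Stor{i}{M}{N}.
\end{equation*}

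Finally, to upgrade this pointwise assertion to an isomorphism of functors on finitely generated $R$-modules, I would check naturality in $N$. A morphism $f\colon N_1 \to N_2$ of finitely generated $R$-modules corresponds under $D$ to $Df\colon DN_2 \to DN_1$ of $\Rop$-modules, and Theorem \ref{thm:duality} is natural in its $\Rop$-argument (as noted in the concluding sentence of its proof); combined with the naturality of the biduality isomorphism $N \is DDN$, this forces the composite isomorphism displayed above to be natural in $N$. The main point is really just careful bookkeeping of the duality; the Theorem does all the heavy lifting, so I do not anticipate any genuine obstacle beyond organizing the naturality argument cleanly.
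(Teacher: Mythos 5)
Your proof is correct and follows essentially the same route as the paper: both choose $E$ to be the injective hull of $\kk/\operatorname{Jac}\kk$ (your $\bigoplus_\m E_\kk(\kk/\m)$ is the same module), invoke \thmref{duality}, and then use that $D=\Hom_\kk(-,E)$ is a duality on finitely generated modules so that $N\cong DDN$ naturally. The paper simply writes this as a one-line chain $\Ctor{i}{M}{-}\cong\Ctor{i}{M}{\DD\DD(-)}\cong\Stor{i}{M}{\DD\DD(-)}\cong\Stor{i}{M}{-}$, leaving the naturality and finite-generation bookkeeping implicit where you spell them out.
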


\begin{proof}
  Let $\kk$ be artinian and $R$ be finitely generated as a
  $\kk$-module. Let $E$ denote the injective hull of
  $\kk/\operatorname{Jac}\kk$ and let $\DD(-) = \Hom[\kk]{-}{E}$ be
  the duality functor for $R$. By \thmref{duality} there are
  isomorphisms of functors on the category of finitely generated
  $R$-modules,
  \begin{equation*}
    \Ctor{i}{M}{-} \dis \Ctor{i}{M}{\DD(\DD(-))} \dis 
    \Stor{i}{M}{\DD(\DD(-))} \dis \Stor{i}{M}{-}\:. \qedhere 
  \end{equation*}
\end{proof}

\begin{ipg}
  \label{ipg:matlis}
  Let $R$ be a commutative local ring and let $E$ denote the injective
  hull of its residue field. An $R$-module $N$ is called \emph{Matlis
    reflexive} if the canonical map $N \to \Hom{\Hom{N}{E}}{E}$ is an
  isomorphism. For example, finitely generated complete modules, in
  particular modules of finite length, are Matlis reflexive. With
  $\DD(-) = \Hom{-}{E}$ the isomorphisms in the proof of
  \corref{artin} remain valid and yield, for a finitely generated
  $R$-module $M$, isomorphisms
  \begin{equation*}
    \Ctor{i}{M}{-} \dis \Stor{i}{M}{-}
  \end{equation*}
  of functors on the subcategory of Matlis reflexive $R$-modules.

  Over a commutative complete local ring, Matlis duality establishes
  that all finitely generated modules and all artinian modules are
  Matlis reflexive. The next corollary accounts for the case of
  complete local rings in the Main Theorem.
\end{ipg}

\begin{cor}
  \label{cor:local}
  \protect\pushQED{\qed}%
  Let $R$ be a commutative noetherian complete local ring. For all
  finitely generated $R$-modules $M$ and all $i\in\ZZ$, there are
  isomorphisms of functors on the category of finitely generated
  $R$-modules $\Ctor{i}{M}{-} \dis \Stor{i}{M}{-}$.\qed
\end{cor}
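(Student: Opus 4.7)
The plan is to follow the template already laid out in \ref{ipg:matlis} and the proof of \corref{artin}, with Matlis duality playing the role that ordinary duality over an Artin algebra plays in that proof. The key observation, made just before the statement, is that over a commutative complete local ring $R$ every finitely generated $R$-module is Matlis reflexive; this is the replacement for the (finitely generated) Artin algebra input.

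First I would fix a finitely generated $R$-module $M$. Since $R$ is noetherian and $M$ is finitely generated, $M$ admits a resolution by finitely generated free $R$-modules, so the hypothesis ``$M$ has a degree-wise finitely generated projective resolution'' of \thmref{duality} is satisfied with $\kk = R$ and $E$ the injective hull of the residue field. Let $\DD(-) = \Hom{-}{E}$ be the Matlis duality functor. Then \thmref{duality} gives, for every $R$-module $N$ and every $i \in \ZZ$, an isomorphism
\begin{equation*}
  \Ctor{i}{M}{\DD(N)} \dis \Stor{i}{M}{\DD(N)}
\end{equation*}
natural in $N$; applying this with $N$ replaced by $\DD(N')$ for a finitely generated $R$-module $N'$ yields $\Ctor{i}{M}{\DD\DD(N')} \dis \Stor{i}{M}{\DD\DD(N')}$.

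Second, I would use Matlis reflexivity to replace $\DD\DD(N')$ with $N'$: since $R$ is complete local and $N'$ is finitely generated, the canonical map $N' \to \DD\DD(N')$ is an isomorphism. Substituting gives the desired isomorphism
\begin{equation*}
  \Ctor{i}{M}{N'} \dis \Stor{i}{M}{N'}
\end{equation*}
natural in the finitely generated $R$-module $N'$, exactly as in the three-term chain at the end of the proof of \corref{artin}. I do not anticipate any real obstacle here: both inputs---the hypotheses of \thmref{duality} and Matlis reflexivity of finitely generated modules over a complete local ring---are immediate from the setup, and the argument is a direct transcription of \corref{artin} with Matlis duality in place of $\Hom[\kk]{-}{E}$.
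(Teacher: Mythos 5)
Your argument is correct and is essentially the paper's own: the paper's proof is the discussion in \pgref{ipg:matlis}, which likewise invokes \thmref{duality} with $\DD(-)=\Hom{-}{E}$ for $E$ the injective hull of the residue field and then appeals to Matlis reflexivity of finitely generated modules over a complete local ring. Your spelling out that one can take $\kk=R$ and that $M$ has a degree-wise finitely generated projective resolution because $R$ is noetherian merely makes explicit what the paper leaves tacit.
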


\begin{rmk}
  \label{rmk:yoshino}
  For non-negative integers $i$, the isomorphisms in
  \corref[Corollaries~]{noetherian} and \corref[]{artin} were proved
  by Yoshino \cite[thms.~5.3 and 5.5]{YYs01}. Indeed, Yoshino
  introduces and studies in \emph{loc.\,cit.}\ the \emph{Tate--Vogel
    completion} of covariant half exact functors.  For such a functor
  $\T$---our interest is in $\Tor{i}{M}{-}$ for an $\Rop$-module
  $M$---the Tate--Vogel completion $\T^\wedge$ is given by
  $\T^\wedge(N) = \lim_{k \geq 0} \sa^k \sa_k \T(N)$, for every
  $R$-module $N$.  Here $\sa_k\T$ denotes the $k^\mathrm{th}$
  \emph{left satellite} of the functor $\T$; see
  \chpcite[III]{careil}.

  For $i\in \ZZ$ set $\T_i = \Tor{i}{M}{-}$. As $\T_i$ is $0$ for
  $i<0$, Yoshino only considers the Tate--Vogel completion
  $\T_i^\wedge$ for $i\ge 0$. However, for $i\ge 0$ dimension shifting
  yields $\sa_k\T_i \is \T_{k+i}$.  Thus, in the definition of the
  Tate--Vogel completion of $\T_i$ we may replace $\sa_k\T_i$ by
  $\T_{k+i}$, so that in the limit one gets a Tate--Vogel completion
  $\T^\wedge_i$ of $\T_i$ for all $i\in\ZZ$. By \ref{ipg:exist} and
  \dfnref{complete} this is complete homology $\Ctor{i}{M}{-}$.
\end{rmk}

\begin{rmk}
  \label{rmk:russell}
  In his thesis \cite{JRs-phd}, Russell studies an \emph{asymptotic
    stabilization} of the tensor product and compares it to Yoshino's
  work~\cite{YYs01} discussed above, to stable homology (which he calls Vogel
  homology), and to complete homology (which, not having access to
  \cite{MTr-phd}, he referres to as ``mirror-Mislin''). As already
  mentioned, Russel \prpcite[58]{JRs-phd} proves surjectivity of the
  comparison map from stable to complete homology; furthermore
  \prpcite[57]{JRs-phd} coincides with \corref{noetherian}.
\end{rmk}

\section{Comparison to Tate homology} %%%%%%%%%%%%%%%%%%%%%%%%%%%%%%%%

\noindent
Tate (co)homology was originally defined for modules over finite group
algebras. Through works of Iacob \cite{AIc07} and Veliche~\cite{OVl06}
the theories have been generalized to the extent that one can talk
about Tate homology $\Ttor{}{M}{N}$ and Tate cohomology
$\Text{}{M}{N}$ for modules over any ring, provided that the first
argument, $M$, has a complete projective resolution; see
\pgref{ipg:gproj}.

Stable cohomology and the P-completion of covariant Ext always agree,
and they coincide with Tate cohomology whenever the latter is defined;
see Appendix~B. The questions of when stable homology and complete
homology (the J-completion of Tor) agree, and when they coincide with
Tate homology, do not yet have satisfactory answers. In \cite{CCLP} we
prove that stable homology, $\widetilde{\mathrm{Tor}}$, agrees with
Tate homology over a ring $R$ if and only if every Gorenstein
projective $\Rop$-module is Gorenstein flat; see \pgref{ipg:gproj} and
\pgref{ipg:gflat}. In this section we prove that complete homology
$\widecheck{\mathrm{Tor}}$ agrees with Tate homology under the exact
same condition. We give two proofs of this fact; the first one uses
the result for stable homology, and the second is independent thereof.

\begin{ipg}
  \label{ipg:gproj}
  An acyclic complex $T$ of projective $\Rop$-modules is called
  \emph{totally acyclic} if $\Hom[\Rop]{T}{P}$ is acyclic for every
  projective $\Rop$-module $P$.  An $\Rop$-module $G$ is called
  \emph{Gorenstein projective} if there exists a totally acyclic
  complex $T$ of projective $\Rop$-modules with $\Coker{(T_1 \to T_0)}
  \is G$.  A \emph{complete projective resolution} of an $\Rop$-module
  $M$ is a diagram $T \xra{\varpi} P \qra M$, where $T$ is a totally
  acyclic complex of projective $\Rop$-modules, $P\qra M$ is a
  projective resolution, and $\varpi_i$ is an isomorphism for $i \gg
  0$; see \seccite[2]{OVl06}. A module has a complete projective
  resolution if and only if it has finite Gorenstein projective
  dimension; see \thmcite[3.4]{OVl06}.

  Let $M$ be an $\Rop$-module with a complete projective resolution $T
  \to P \to M$, and let $N$ be an $R$-module. The \emph{Tate homology}
  of $M$ and $N$ over $R$ is the collection of $\kk$-modules
  $\Ttor{i}{M}{N}= \H[i]{\tp{T}{N}}$ for $i\in\ZZ$; see
  \seccite[2]{AIc07}.
\end{ipg}

If $R$ is noetherian and $M$ is a finitely generated $\Rop$-module
that has a complete projective resolution, then one has $\Stor{}{M}{-}
\is \Ttor{}{M}{-}$; see \thmcite[6.4]{CCLP}. Next comes the analogous
statement for complete homology.

\begin{prp}
  \label{prp:complete-Tate-fg}
  Let $R$ be a noetherian ring and let $M$ be a finitely generated
  $\Rop$-module that has a complete projective resolution.  For all
  $i\in\ZZ$, there are isomorphisms of functors on the category of
  $R$-modules,
  \begin{equation*}
    \Ctor{i}{M}{-} \dis \Ttor{i}{M}{-}\:.
  \end{equation*} 
\end{prp}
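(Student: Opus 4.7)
The plan is to combine the isomorphism $\Stor{i}{M}{-} \is \Ttor{i}{M}{-}$ supplied by \thmcite[6.4]{CCLP}---whose hypotheses are exactly those of the proposition---with the general \lemref{induced isomorphism} to identify stable and complete homology; composition of the two natural isomorphisms then yields the desired conclusion.

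First I would invoke \thmcite[6.4]{CCLP}, reducing matters to showing $\Stor{i}{M}{-} \is \Ctor{i}{M}{-}$. To obtain this from \lemref{induced isomorphism}, applied with $\U_i = \Stor{i}{M}{-}$, $\T_i = \Tor{i}{M}{-}$, and the morphism $\eth$ recalled in~\ref{ipg:tes}, three hypotheses must be verified: that $\Stor{}{M}{-}$ is a homological functor, recorded in \cite[(2.4.2)]{CCLP}; that $\Stor{i}{M}{E}$ vanishes on every injective $R$-module $E$, recorded in \cite[2.3]{CCLP}; and that $\eth_i$ is an isomorphism for $i \gg 0$. The first two items are already on record and need no further comment.

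For the third hypothesis I would exploit the complete projective resolution $T \xra{\varpi} P \qra M$: since $\varpi_i$ is an isomorphism for $i \gg 0$, the complexes $\tp{T}{N}$ and $\tp{P}{N}$ agree in sufficiently high degrees for every $R$-module $N$, so $\Ttor{i}{M}{N} \is \Tor{i}{M}{N}$ for $i \gg 0$; composing with the isomorphism of \thmcite[6.4]{CCLP} realizes $\eth_i$ as an isomorphism in high degrees. \lemref{induced isomorphism} then delivers $\Stor{i}{M}{-} \is \Ctor{i}{M}{-}$ for every $i \in \ZZ$, and pasting this with \thmcite[6.4]{CCLP} finishes the argument. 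I do not anticipate a serious obstacle, since both central tools are already in place; the one compatibility worth confirming carefully is that under the isomorphism from \thmcite[6.4]{CCLP} the comparison map $\eth_i$ corresponds to the natural map $\Ttor{i}{M}{-} \to \Tor{i}{M}{-}$ induced by $\varpi$, and this should follow directly from how that isomorphism is constructed in \emph{loc.\,cit.}
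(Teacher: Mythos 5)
Your plan is correct in essence, but it takes a genuinely different route from the paper's own proof, and the paper in fact records two proofs that together bracket your idea. The paper's main proof does not touch Tate homology until the last sentence: it first invokes \cite[2.4.1, 3.4]{OVl06} to get $\Ext[\Rop]{i}{M}{R}=0$ for $i\gg 0$, feeds this into \corref{noetherian} (which, via finite copure flat dimension, \prpcite[2.9]{CCLP}, and \lemref{induced isomorphism}, gives $\Ctor{i}{M}{-}\is\Stor{i}{M}{-}$), and only then applies \thmcite[6.4]{CCLP}. You instead propose to verify the ``high-degree isomorphism'' hypothesis of \lemref{induced isomorphism} by reading it off the complete projective resolution $T\to P\to M$ and \thmcite[6.4]{CCLP}; this is much closer in spirit to the paper's \emph{alternate} proof, which applies \lemref{induced isomorphism} to the $\varpi$-induced morphism $\Ttor{}{M}{-}\to\Tor{}{M}{-}$ directly (see \pgref{ipg:reprove}) and shows vanishing on injectives via the degree-wise finitely generated structure of $T$.

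One refinement worth noting: the compatibility you flag---that $\eth_i$ should equal the composite of the \thmcite[6.4]{CCLP} isomorphism with the $\varpi$-induced map---is plausible but not needed. \lemref{induced isomorphism} requires only \emph{some} morphism of homological functors $\upsilon\colon\Stor{}{M}{-}\to\Tor{}{M}{-}$ with $\upsilon_i$ an isomorphism for $i\gg 0$; you can take $\upsilon$ to be exactly that composite, since the \thmcite[6.4]{CCLP} isomorphism and the $\varpi$-induced map are both morphisms of homological functors. The lemma then yields an isomorphism $\Ctor{i}{M}{-}\is\Stor{i}{M}{-}$ for all $i$, and pasting with \thmcite[6.4]{CCLP} finishes the argument without ever needing to identify the result with $\eth_i$. (Identifying it with $\eth_i$ would be needed to show the canonical comparison map of~\pgref{ipg:tes} is an isomorphism, a stronger and still-open point in general; cf.\ \pgref{question}.) As written your proof is sound once either the compatibility is checked or the morphism is swapped for the composite; both variants avoid the $\operatorname{Ext}$-vanishing route used in the paper's main proof.
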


\begin{proof}
  It follows from \cite[2.4.1, 3.4]{OVl06} that
  $\Ext[\Rop]{i}{M}{R}=0$ holds for all $i\gg 0$. Therefore, by
  \corref{noetherian}, there are isomorphisms $\Ctor{i}{M}{-} \is
  \Stor{i}{M}{-}$ of functors for all $i\in\ZZ$. Now
  \thmcite[6.4]{CCLP} completes the proof.
\end{proof}

With the next observation we can give an alternate proof of
\prpref{complete-Tate-fg}, one that does not rely upon knowing how
stable homology compares to Tate homology.

\begin{ipg}
  \label{ipg:reprove}
  Let $M$ be an $\Rop$-module with a complete projective resolution
  \mbox{$T \to P \to M$}.  The morphism $T \to P$ induces a morphism
  $\upsilon$ of homological functors, whose components
  $\mapdef{\upsilon_i}{\Ttor{i}{M}{-}}{\Tor{i}{M}{-}}$ are
  isomorphisms for all $i\gg 0$. To prove that Tate homology
  $\Ttor{}{M}{-}$ agrees with complete homology $\Ctor{}{M}{-}$, it
  suffices by \lemref{induced isomorphism} to show that
  $\Ttor{}{M}{E}$ vanishes for all injective $R$-modules~$E$.
\end{ipg}

\begin{proof}[Alternate proof of \pgref{prp:complete-Tate-fg}]
  It follows from \cite[2.4.1]{OVl06} and \thmcite[3.1]{LLAAMr02} that
  $M$ has a complete projective resolution $T \to P \to M$ with $T$
  and $P$ degree-wise finitely generated. Let $E$ be an injective
  $R$-module; the complex
  \begin{equation*}
    \tp{T}{E} \dis \tp{T}{\Hom{R}{E}} \dis \Hom{\Hom[\Rop]{T}{R}}{E}
  \end{equation*}
  is acyclic; the last isomorphism is
  \cite[prop.~VI.5.2]{careil}. Thus one has
  $\Ttor{i}{M}{E}=\H[i]{\tp{T}{E}}=0$ for all $i\in\ZZ$, and now
  \ref{ipg:reprove} finishes the argument.
\end{proof}

\begin{ipg}
  \label{ipg:gflat}
  An $\Rop$-module $G$ is called \emph{Gorenstein flat} if there
  exists an acyclic complex $T$ of flat $\Rop$-modules with
  $\Coker{(T_1 \to T_0)}\is G$ and such that $\tp{T}{E}$ is acyclic
  for every injective $R$-module $E$. In general, it is not known
  whether or not Gorenstein projective modules are Gorenstein flat;
  see also Emmanouil~\thmcite[2.2]{IEm12}.
\end{ipg}

We show in \thmcite[6.7]{CCLP} that one has $\Stor{}{M}{-} \is
\Ttor{}{M}{-}$ for every $\Rop$-module $M$ with a complete projective
resolution if and only if every Gorenstein projective $\Rop$-module is
Gorenstein flat. Here is the result for complete homology.

\begin{prp}
  \label{prp:complete-Tate}
  The following conditions on $R$ are equivalent.
  \begin{eqc}
  \item Every Gorenstein projective $\Rop$-module is Gorenstein flat.
  \item For every $\Rop$-module $M$ that has a complete projective
    resolution and for all $i\in\ZZ$ there are isomorphisms of
    functors on the category of $R$-modules,
    \begin{equation*}
      \Ctor{i}{M}{-} \dis \Ttor{i}{M}{-}\:.
    \end{equation*} 
  \end{eqc}
\end{prp}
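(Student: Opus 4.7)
My plan is to prove the two implications \proofofimp[]{i}{ii} and \proofofimp[]{ii}{i} separately, in each direction reducing the question to whether $\Ttor{i}{M}{E}$ vanishes on injective $R$-modules. The key comparison tool will be \lemref{induced isomorphism} together with the observation in \ref{ipg:reprove}: once $\Ttor{i}{M}{E}=0$ for every injective $R$-module $E$ and all $i\in\ZZ$, the canonical comparison morphism becomes an isomorphism $\Ttor{i}{M}{-} \is \Ctor{i}{M}{-}$.

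For \proofofimp[]{i}{ii}, let $M$ have a complete projective resolution $T \to P \to M$, and for each $n \in \ZZ$ set $K_n = \Coker{(T_{n+1} \to T_n)}$. An appropriate shift of $T$ remains totally acyclic, so each $K_n$ is Gorenstein projective, and hence by (i) is also Gorenstein flat. Any Gorenstein flat module $G$ satisfies $\Tor{i}{G}{E}=0$ for every $i>0$ and every injective $R$-module $E$; this is a standard fact, consistent with the discussion of finite Gorenstein flat dimension in \ref{ipg:fcfd}. A short diagram chase on the syzygy sequences $0 \to K_{n+1} \to T_n \to K_n \to 0$ then shows that $\tp{T}{E}$ is acyclic in every degree, so $\Ttor{i}{M}{E} = \H[i]{\tp{T}{E}} = 0$ for all $i\in\ZZ$, and this direction is complete by \ref{ipg:reprove}.

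For \proofofimp[]{ii}{i}, let $G$ be a Gorenstein projective $\Rop$-module with totally acyclic witness $T$ satisfying $G \is \Coker{(T_1 \to T_0)}$. The truncation of $T$ in non-negative degrees, augmented by $G$, is a projective resolution $P \to G$, and letting $\varpi$ be the identity in non-negative degrees displays $T \to P \to G$ as a complete projective resolution. Hypothesis (ii) then yields $\Ttor{i}{G}{E} \is \Ctor{i}{G}{E}$ for every injective $R$-module $E$, and the right-hand side vanishes for all $i\in\ZZ$ by Definition~\ref{dfn:injcompletion}(1). Thus $\tp{T}{E}$ is acyclic; since $T$ is already a complex of projectives (hence flats) with $G \is \Coker{(T_1 \to T_0)}$, this is precisely the data required by \ref{ipg:gflat} to exhibit $G$ as Gorenstein flat.

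The only real obstacle I foresee lies in \proofofimp[]{i}{ii}, where a purely module-level hypothesis must be converted into a global acyclicity statement about a complex. This conversion rests entirely on the Tor-vanishing of Gorenstein flat modules against injectives, which I will treat as a standard input rather than reprove. The reverse implication should go through smoothly, since the very complex $T$ that witnesses Gorenstein projectivity is made, by (ii), to also witness Gorenstein flatness once acyclicity against injectives is supplied by the universal property.
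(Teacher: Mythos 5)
Your argument is correct and, for the most part, tracks the paper's \emph{alternate} proof rather than its main one. For \proofofimp[]{i}{ii} the paper's main proof routes through stable homology, invoking \prpref{copure} together with \thmcite[6.7]{CCLP}; you instead reduce directly to the vanishing of $\Ttor{i}{M}{E}$ on injectives and then apply \lemref{induced isomorphism} via \ref{ipg:reprove}, which is exactly the strategy of the paper's second proof. Where that second proof cites Emmanouil \thmcite[2.2]{IEm12} for the acyclicity of $\tp{T}{E}$, you unwind the citation into a hands-on argument: each cokernel $K_n = \Coker(T_{n+1}\to T_n)$ is Gorenstein projective, hence Gorenstein flat by (i), hence has $\Tor{i}{K_n}{E}=0$ for $i>0$, and splicing the resulting short exact sequences $0\to\tp{K_{n+1}}{E}\to\tp{T_n}{E}\to\tp{K_n}{E}\to 0$ gives acyclicity of $\tp{T}{E}$. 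This is sound (the Tor-vanishing for Gorenstein flat modules is the $n=0$ case of the copure flat dimension bound the paper already cites in \ref{ipg:fcfd} via \thmcite[4.14]{CFH-11}), and it buys a more self-contained proof at the cost of reproving a known result. For \proofofimp[]{ii}{i} your argument agrees with the paper's; the paper leaves the construction of the complete projective resolution from $T$ implicit (citing \ref{ipg:gproj}), whereas you spell it out, which is a harmless amplification. One small point of emphasis: the vanishing $\Ctor{i}{G}{E}=0$ you use in \proofofimp[]{ii}{i} is perhaps better referenced to \ref{ipg:vanishing} than to \dfnref{injcompletion}(1), since the latter is a property of the J-completion of \emph{any} homological functor while the former records it concretely for $\Ctor{}{-}{-}$; the content is the same either way.
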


\begin{proof} 
  \proofofimp{i}{ii} Let $M$ be an $\Rop$-module that has a complete
  projective resolution. By assumption, $M$ has finite Gorenstein flat
  dimension, so there is an $n\ge 0$ such that $\Tor{i}{M}{E}=0$ holds
  for every injective $R$-module $E$ and all $i > n$; see
  \thmcite[4.14]{CFH-11}. The desired isomorphisms of functors now
  follow from \prpref{copure} and \thmcite[6.7]{CCLP}.

  \proofofimp{ii}{i} Let $M$ be a Gorenstein projective $\Rop$-module
  and let $T$ be a totally acyclic complex of projective
  $\Rop$-modules with $M \is \Coker{(T_1 \to T_0)}$.  By assumption
  there are isomorphisms
  \begin{equation*}
    \H[i]{\tp{T}{E}} \is \Ctor{i}{M}{E}=0
  \end{equation*}
  for every injective $R$-module $E$ and all $i\in\ZZ$; see
  \pgref{ipg:vanishing} and \ref{ipg:gproj}. Thus $\tp{T}{E}$ is
  acyclic for every injective $R$-module $E$, and hence $M$ is
  Gorenstein flat.
\end{proof}

Again, there is an alternate proof that does not require any knowledge
of how stable homology compares to Tate homology.

\begin{proof}[Alternate proof of \pgref{prp:complete-Tate}]
  In the proof above, only the implication \proofofimp[]{i}{ii}
  references stable homology. Let $M$ be an $\Rop$-module that has a
  complete projective resolution \mbox{$T \to P \to M$}. Assuming that
  every Gorenstein projective $\Rop$-module is Gorenstein flat, it
  follows that the complex $\tp{T}{E}$ is acyclic for every injective
  $R$-module $E$; see \thmcite[2.2]{IEm12}. Thus one has
  $\Ttor{i}{M}{E} = \H[i]{\tp{T}{E}}=0$ for all $i\in\ZZ$, and
  \ref{ipg:reprove} finishes the argument.
\end{proof}

\begin{rmk}
  \label{rmk:tate}
  The similarity of \prpref{complete-Tate} to \thmcite[6.7]{CCLP}
  means that for a ring $R$ the following conditions are equivalent:
  \begin{eqc}
  \item For every $\Rop$-module $M$ with a complete projective
    resolution there are isomorphisms of functors $\Ctor{i}{M}{-} \is
    \Ttor{i}{M}{-}$ for all $i\in\ZZ$.
  \item For every $\Rop$-module $M$ with a complete projective
    resolution there are isomorphisms of functors $\Stor{i}{M}{-} \is
    \Ttor{i}{M}{-}$ for all $i\in\ZZ$.
  \end{eqc}
\end{rmk}

% \begin{bfhpg}[Corollary]
%   \label{cor:tate}
%   Let $R$ be a ring and let $M$ be an $\Rop$-module. Assume that
%   Tate homology $\Ttor{}{M}{-}$ is defined, i.e., $M$ has a complete
%   projective resolution. The following conditions are equivalent.
%   \begin{eqc}
%   \item $\Ctor{i}{M}{-} \is \Ttor{i}{M}{-}$ holds for all $i\in\ZZ$.
%   \item $\Stor{i}{M}{-} \is \Ttor{i}{M}{-}$ holds for all $i\in\ZZ$.
%   \end{eqc}
% \end{bfhpg}

% \begin{proof}
%   This is immediate from Proposition \pgref{prp:complete-Tate} and
%   \thmcite[6.7]{CCLP}.
% \end{proof}

As a contrast to \ref{rmk:tate}, we make two remarks to clarify that
stable homology agrees with complete homology in cases where Tate
homology is not defined.

\begin{rmk}
  \label{exa:gor}
  Over a commutative artinian ring $R$ that is not Gorenstein, there
  exist finitely generated modules that do not have a complete
  projective resolution. For example, let $\m$ be a maximal ideal of
  $R$ such that the local ring $R_\m$ is not Gorenstein, then $R/\m$
  is such a module; see \prpcite[2.17 and thm.~2.19]{CFH-11}. In this
  case, Tate homology ``$\Ttor{}{R/\m}{-}$'' is not defined, but by
  the Main Theorem, complete homology $\Ctor{}{R/\m}{N}$ still agrees
  with stable homology $\Stor{}{R/\m}{N}$ for all finitely generated
  $R$-modules $N$.
\end{rmk}

The same phenomenon can occur over a ring with finite sfli;
cf.~\corref{sfli}.

\begin{rmk}
  \label{exa:vNr}
  Let $R$ be von Neumann regular, then every $\Rop$-module is flat. It
  follows that stable homology $\Stor{}{M}{-}$ and complete homology
  $\Ctor{}{M}{-}$ agree and vanish for every $\Rop$-module $M$; see
  \pgref{ipg:vanishing} and \corref{sfli}. It is, however, possible
  that Tate homology ``$\Ttor{}{M}{-}$'' is not defined.  To see this,
  let $R$ be a commutative von Neumann regular ring over which there
  are modules of infinite projective dimension; the existence of such
  rings is proved by Osofsky \cite[3.1]{BLO70}. Let $G$ be an
  $R$-module with a complete projective resolution $T \to P \to G$. It
  follows from \prpcite[7.6]{LWCHHl15} that $T$ is contractible. Thus,
  $G$ has a projective syzygy and hence it has finite projective
  dimension. An $R$-module $M$ of infinite projective dimension,
  therefore, does not have a complete projective resolution, and Tate
  homology ``$\Ttor{}{M}{-}$'' is not defined.
\end{rmk}

Over a ring $R$ with finite $\sfli R$, every Gorenstein projective
$\Rop$-module is Gorenstein flat; cf.~\ref{ipg:gflat}. The situation
is the same if $R$ is commutative and artinian, and more generally if
$R$ is right-perfect: Let $T$ be a totally acyclic complex of
projective $\Rop$-modules, let $I$ be an injective $R$-module and $E$
be a faithfully injective $\kk$-module. The complex $\tp{T}{I}$ is
acyclic if and only if the dual complex $\Hom[\kk]{\tp{T}{I}}{E} \is
\Hom[\Rop]{T}{\Hom[\kk]{I}{E}}$ is acyclic. The $\Rop$-module
$\Hom[\kk]{I}{E}$ is flat and, by the assumption on $R$, that means
projective. By \ref{ipg:gproj} and the isomorphism above, the
complex $\Hom[\kk]{\tp{T}{I}}{E}$ is acyclic, whence $\tp{T}{I}$
is acyclic.

The general question of whether or not Gorenstein projective modules
are Gorenstein flat remains open.  We have no example of a ring over
which complete homology agrees with stable homology, and over which
Gorenstein projective modules are not known to be Gorenstein flat.

\appendix
\section*{Appendix A}
\stepcounter{section}

\noindent
Let $M$ be an $\Rop$-module and let $N$ be and $R$-module. We show
that the natural comparison maps
$\mapdef{\s_i}{\Stor{i}{M}{N}}{\Ctor{i}{M}{N}}$ described in
\pgref{ipg:tes} are surjective.

\begin{ipg}
  Let $P\qra M$ be a projective resolution and $N\qra I$ be an
  injective resolution. The complex $\tp{P}{I}$ is the direct sum
  totalization---and $\btp{P}{I}$ is the product totalization---of the
  double complex $D = (D_m^n)_{m,n\ge 0}$ with $D_m^n = \tp{P_m}{I^n}$,
  anti-commuting squares, and exact rows:
  \begin{equation*}
    \xymatrix@=1.25pc{
      \vdots \ar[d] &
      \vdots \ar[d] &
      \vdots \ar[d] &
      &
      \vdots \ar[d] &
      \\
      \tp[]{P_m}{I^0} \ar[r] \ar[d] &
      \tp[]{P_m}{I^1} \ar[r] \ar[d] &
      \tp[]{P_m}{I^2} \ar[r] \ar[d] &
      \ \ \cdots \ \, \ar[r]&
      \tp[]{P_m}{I^n} \ar[r] \ar[d] &
      \cdots \\
      *{\vdots_{{}_{}}^{}} \ar[d] &
      *{\vdots_{{}_{}}^{}} \ar[d] &
      *{\vdots_{{}_{}}^{}} \ar[d] &
      &
      *{\vdots_{{}_{}}^{}} \ar[d] &
      \\
      % \tp[]{P_2}{I^0} \ar[r] \ar[d] &
      % \tp[]{P_2}{I^1} \ar[r] \ar[d] &
      % \tp[]{P_2}{I^2} \ar[r] \ar[d] &
      % \quad\cdots &
      % \tp[]{P_2}{I^n} \ar[r] \ar[d] &
      % \cdots \\
      \tp[]{P_1}{I^0} \ar[r] \ar[d] &
      \tp[]{P_1}{I^1} \ar[r] \ar[d] &
      \tp[]{P_1}{I^2} \ar[r] \ar[d] &
      \ \ \cdots \ \, \ar[r] &
      \tp[]{P_1}{I^n} \ar[r] \ar[d] &
      \cdots \\
      \tp[]{P_0}{I^0} \ar[r] &
      \tp[]{P_0}{I^1} \ar[r] &
      \tp[]{P_0}{I^2} \ar[r] &
      \ \ \cdots \ \, \ar[r] &
      \tp[]{P_0}{I^n} \ar[r] &
      \cdots
    }
  \end{equation*}
  For every $k > 0$ the complex $\tp{P}{I^{\ge k}}$ is the direct sum
  totalization of the double complex obtained by removing the columns
  $0, \ldots, k-1$ from $D$.

  Let $k\ge 0$; an element in $\tpp{P}{I^{\ge k}}_i$ is a
  sequence $v = (v^n)_{n\ge k}$ with $v^n$ in $D^n_{i+n}$,
  and $v^n=0$ for $n \gg 0$. Notice that one has $v^n=0$ for $n < -
  i$. That is, $v$ is supported at $D^n_{i+n}$ for finitely many $n
  \ge \max\{k,-i\}$.
\end{ipg}

We make repeated use of the technique of compressing cycles in tensor
products.  Though it is standard, we develop it here in the notation
of the double complex $D$.

\begin{ipg}
  \label{claim}
  Fix $i\in\ZZ$ and $k\ge 0$. Let $v$ be an element in $\tpp{P}{I^{\ge
      k}}_i$ supported at $D^n_{i+n}$ for $h\le n \le j$; its
  boundary $\partial(v)$ in $\tpp{P}{I^{\ge k}}_{i-1}$ is then
  supported at $D^{n}_{i-1+n}$ for $h \le n \le j+1$, and the
  following hold:
  \begin{rqm}
  \item[(a)] If $\partial(v)$ is supported at $D^{n}_{i-1+n}$ for $h
    \le n \le j'$ with $h < j' \le j$, then there exists an element
    $u$ in $\tpp{P}{I^{\ge k}}_{i+1}$ supported at $D^n_{i+1+n}$ for
    $j'-1\le n \le j-1$, such that $v - \partial(u)$ is supported at
    $D^n_{i+n}$ for $h \le n \le j'-1$.
  \item[(b)] If $\partial(v) = 0$, then for every $e$ with $\max\{k,-i\}
    \le e \le h$ here exists an element $u_e$ in $\tpp{P}{I^{\ge
        k}}_{i+1}$ supported at $D^n_{i+1+n}$ for $e \le n \le j-1$,
    such that $v - \partial(u_e)$ is supported at $D^e_{i+e}$.
  \end{rqm}
  Indeed, start at the last component of $v$, that is, at $v^j\in
  D^j_{i+j}$. That element is a cycle in row $i+j$ by the assumption
  $j'\le j$, so there is an element $u^{j-1}$ in $D_{i+j}^{j-1}$ with
  $\partial^h(u^{j-1}) = v^j$. Thus $v' = v - \partial(u^{j-1})$ is
  supported at $D^n_{i+n}$ for $h \le n \le j-1$. If $j' = j$, then
  $u^{j-1}$ is the element claimed in (a). If not, notice that one has
  $\partial(v') = \partial(v)$ and repeat; after $j-j'+1$ iterations
  one has the desired element $u$.

  Now, if $v$ is a cycle, then $\partial(v)$ has empty support. Thus
  the procedure above can be repeated until $v-\partial(u)$ is
  supported at $D^h_{i+h}$. From there it can be repeated to produce a
  $u$ with $v-\partial(u)$ supported at $D^e_{i+e}$ as long as
  $D^e_{i+e}$ is within the boundaries of the (truncated) double complex.
\end{ipg}

We can now describe the elements in $\Ctor{i}{M}{N} \is \lim_{k\ge
  0}\H[i]{\tp{P}{\tha{k}{I}}}$ in such a way that surjectivity of the
comparison map becomes almost trivial.

\begin{ipg}
  \label{surjective}
  Fix $i\in\ZZ$ and set $d = \max\{0,-i\}$; one has
  \begin{equation*}
    \lim_{k\ge 0}\H[i]{\tp{P}{\tha{k}{I}}} \is 
    \lim_{k\ge d}\H[i]{\tp{P}{\tha{k}{I}}}\:.
  \end{equation*}
  An element in $\lim_{k\ge d}\H[i]{\tp{P}{\tha{k}{I}}}$ is a
  compatible family of homology classes $([w^{\ge k}])_{k\ge d}$,
  where $w^{\ge k}$ is a cycle of degree $i$ in $\tp{P}{I^{\ge k}}$;
  by \pgref{claim}(b) we may assume that $w^{\ge k}$ is supported at
  the left edge of the truncated double complex, i.e.\ at
  $D^{k}_{i+k}$. Compatibility means that for each $k\ge d$ the cycle
  $w^{\ge k} - w^{\ge k+1}$ in $\tp{P}{I^{\ge k}}$ is a boundary:
  $w^{\ge k} - w^{\ge k+1} = \partial(x)$ for some $x$ in
  $\tp{P}{I^{\ge k}}$. It is supported at $D^k_{i+k}$ and
  $D^{k+1}_{i+1+k}$, so by \pgref{claim}(a) there exists an element
  $v^k\in D^{k}_{i+1+k}$ with $\partial(v^k) = w^{\ge k} - w^{\ge
    k+1}$. Observe that one has $\partial^v(v^k) = w^{\ge k}$ and
  $\partial^h(v^k) = -w^{\ge k+1}$.

  To see that $\s_i$ is surjective, let an element $([w^{\ge k}])_{k\ge d}$ in
  $\lim_{k\ge d}\H[i]{\tp{P}{\tha{k}{I}}}$ be given. Choose for each
  $k\ge d$ an element $v^k$ as above. The family $v = (v^k)_{k \ge d}$
  is an element in $(\btp{P}{I})_{i+1}$ and $\partial(v) = w^{\ge d}$ is in
  $\tp{P}{I}$; the equivalence class $[v]$ in $(\ttp{P}{I})_{i+1}$ is a
  cycle with $[\partial(v^{\ge k})] = [w^{\ge k}]$. Thus one has
  $\s_i([v]) = ([w^{\ge k}])_{k\ge d}$.
\end{ipg}

Next we briefly discuss injectivity.

\begin{ipg}
  \label{injective}
  Adopt the notation from \pgref{surjective}. Consider a cycle in
  $(\ttp{P}{I})_{i+1}$, represented by $z = (z^k)_{k\ge d} $ in
  $(\btp{P}{I})_{i+1}$, and assume $\s_i([z]) = 0$. To prove that $[z]$
  is zero in $\H[i+1]{\ttp{P}{I}} = \Stor{i}{M}{N}$ amounts to proving
  the existence of an element $v$ in $(\btp{P}{I})_{i+2}$ such that $z
  - \partial(v)$ has finite support, i.e.\ belongs to
  $\tpp{P}{I}_{i+1}$.  

  Consider $\s_i([z]) = ([\partial(z^{\ge k})])_{k \ge d}$ in
  $\lim_{k\ge d}\H[i]{\tp{P}{\tha{k}{I}}}$. As $\partial(z)$ is in
  $(\tp{P}{I})_{i}$, it follows that $\partial(z^{\ge k})$ for $k \gg
  0$ is supported at only one component of the double complex. That
  is, one can choose $k' > d$ such that for all $k\ge k'$ the element
  $b^k = \partial(z^{\ge k})$ in $(\tp{P}{I^{\ge k}})_i$ is supported
  at $D^{k}_{i+k}$; in fact, it is $\partial^v(z^k) =
  -\partial^h(z^{k-1})$.

  The assumption $\s_i([z]) =0$ implies, in particular, that for
  every $k\ge k'$ there exists an element $x^k$ in $(\tp{P}{I^{\ge
      k}})_{i+1}$ with $\partial(x^k) = b^k$. By \pgref{claim}(a) we may
  assume that $x^k$ is supported at $D^{k}_{i+k+1}$ where also $z^k$
  resides. Thus one has $\partial^v(x^k) = b^k = \partial^v(z^k)$ and
  $\partial^h(x^k) = 0$.
\end{ipg}

We have not been able to verify that the comparison map is injective,
i.e.\ an isomorphism, in situations that are not covered by
\prpref{copure}. In the primitive case of that result, $M$ is a module
with $\Tor{>0}{M}{E}=0$ for all injective $R$-modules $E$, so the
columns of $D$ are exact, and a straightforward diagram chase then
produces the element $v$ sought after in \pgref{injective}. As the
index $i$ and the cycle $z$ are arbitrary, this shows that the
comparison map is injective. However, we already knew that and more
from \prpref{copure}, so to conclude we ask:

\begin{bfhpg}[Question]
\label{question}
Let $M$ be an $\Rop$-module and $i$ be an integer. If for every
$R$-module $N$ there is an isomorphism $\Stor{i}{M}{N} \is
\Ctor{i}{M}{N}$, is then the comparison map $\s_i$ an isomorphism?
\end{bfhpg}

\section*{Appendix B}
\stepcounter{section}

\noindent
Kropholler states in \seccite[3.3]{PHK95} that the three
generalizations of Tate cohomology due to Benson and Carlson
\cite{DJBJFC92}, Mislin \cite{GMs94}, and P.~Vogel \cite{FGc92} are
isomorphic. A proof of this claim can be engineered by mimicking
arguments in Nucinkis's paper \cite{BEN98}. The purpose of this
appendix is to sketch how this would work.

First, a word on notation.

\begin{bfhpg}[Notation]
  For $R$-modules $M$ and $N$ we use, like in \dfnref{complete}, the
  symbols $\Text{i}{M}{N}$ for $i\in\ZZ$ to denote the P-completion of
  the cohomological functor $\Ext{i}{M}{-}$ evaluated at $N$. This is
  the standard notation for Tate cohomology, but no ambiguity arises
  as complete and Tate cohomology agree whenever the latter is
  defined; see \pgref{ipg:gproj} and Cornick and
  Kropholler~\thmcite[1.2]{JCrPHK97}.
\end{bfhpg}

\begin{ipg}
  With extra notation to distinguish them, the three families of
  cohomology modules are defined as follows,
  \begin{align*}
    {\text{(complete, Mislin)}} \hspace*{3em} \xExthat{comp.}{i}{M}{N}
    =& \colim_{k \ge 0}
    \sa_{k}\Ext{k+i}{M}{N}\:,\\
    {\text{(stable, Vogel)}} \hspace*{3em} \xExthat{stable}{i}{M}{N}
    =& \
    \H[-i]{\Hom{P}{Q}/\oHom{P}{Q}}\:,\text{ and}\\
    {\text{(Benson and Carlson)}} \hspace*{3em} \xExthat{B \&
      C}{i}{M}{N} =& \colim_{k \ge 0}
    \uHom{\syz{k}{M}}{\syz{k-i}{N}}\:.
  \end{align*}
  In the first line, $\sa_k$ denotes the $k^\mathrm{th}$ left
  satellite; see \chpcite[III]{careil}. In the second line, $P \qra M$
  and $Q \qra N$ are projective resolutions, and $\oHom{P}{Q}$ is the
  subcomplex of $\Hom{P}{Q}$ with modules $\oHom{P}{Q}_i =
  \coprod_{n\in\ZZ}\Hom{P_n}{Q_{n+i}}$; here the coproduct replaces
  the product used for the Hom functor.  In the third line,
  $\underline{\mathrm{Hom}}$ is notation for stable Hom: the
  $\kk$-module of homomorphisms modulo those that factor through a
  projective $R$-module. Finally, $\syz{n}{M}$ is the $n^\mathrm{th}$
  syzygy, $\Coker{(P_{n+1} \to P_{n})}$, in a projective resolution,
  $P \qra M$; by Schanuel's lemma it is unique up to a projective
  summand.
\end{ipg}

Mislin \seccite[4]{GMs94} proved that his complete cohomology theory
is isomorphic to Benson and Carlson's $\smash{\XExthat{B\&C}}$. Thus
it is sufficient to show that stable cohomology is isomorphic to
$\XExthat{B\&C}$. Our goal is to construct a morphism
\begin{equation*}
  \xExthat{stable}{}{M}{-} \xra{\quad\mu\quad} \xExthat{B \&
    C}{}{M}{-}
\end{equation*}
of cohomological functors, such that the maps
$\mapdef{\mu^N_i}{\xExthat{stable}{i}{M}{N}}{\xExthat{B\&C}{i}{M}{N}}$
are isomorphisms and compatible with the connecting homomorphisms.

\begin{bfhpg*}[Construction of $\mathbf{\mu}$]
  Fix an $i\in\ZZ$ and an $R$-module $N$. An element
  \begin{equation*}
    \widehat{\varphi} \in \xExthat{stable}{i}{M}{N} =
    \H[-i]{\Hom{P}{Q}/\oHom{P}{Q}}
  \end{equation*}
  is represented by a homomorphism $\varphi$ of homological degree
  $-i$, which is a chain map in high degrees, i.e., for $k\gg 0$ the
  diagram
  \begin{equation*}
    \xymatrix{\cdots \ar[r] &P_{k+1} \ar[r] \ar[d]^-{\f_{k+1}} & P_{k}
      \ar[r] \ar[d]^-{\f_k} & \syz{k}{M} \ar[d]^-{\widetilde{\f}_k}\\
      \cdots \ar[r] &Q_{k+1-i} \ar[r] & Q_{k-i} \ar[r] & \syz{k-i}{N}
    }
  \end{equation*}
  is commutative up to a sign $(-1)^i$.  The right-hand square defines
  $\widetilde{\varphi}_{k}$.  In this way, $\varphi$ defines element
  $\widetilde{\varphi}$ in $\xExthat{B \& C}{n}{M}{N}$. To see that
  this yields a homomorphism
  \begin{equation*}
    \dmapdef{\mu^N_i}{\xExthat{stable}{i}{M}{N}}{\xExthat{B \&
        C}{i}{M}{N}}\:, 
  \end{equation*}
  it must be verified that $\widetilde{\f}$ is independent of the
  choice of representative $\varphi$ of $\widehat{\varphi}$ in
  $\xExthat{stable}{i}{M}{N}$. If $\widehat{\f} = \widehat{\psi}$ in
  $\xExthat{stable}{i}{M}{N}$, then $\f- \psi$ is $0$-homotopic in
  high degrees, so for every $k\gg 0$ the induced homomorphism
  $\mapdef{\widetilde{\f}_k -
    \widetilde{\psi}_k}{\syz{k}{M}}{\syz{k-i}{N}}$ factors through the
  projective module $Q_{k-i}$, whence it is zero in
  $\uHom{\syz{k}{M}}{\syz{k-i}{N}}$.

  It is straightforward to verify that $\mu_i^N$ as defined above is
  natural in $N$ and that the family $\mu^N$ is compatible with the
  connecting homomorphisms.
\end{bfhpg*}

\begin{bfhpg*}[Injectivity]
  Let $\widehat{\f}$ be an element in the kernel of $\mu^N_i$. For
  every $k \gg 0$ the induced morphism
  $\mapdef{\widetilde{\f}_k}{\syz{k}{M}}{\syz{k-i}{N}}$ factors
  through a projective $R$-module $L$ and further through the
  surjection $Q_{k-i} \onto \syz{k-i}{N}$,
  \begin{equation*}
    \xymatrix@R=1.5pc{
      \cdots \ar[r] & P_{k+1} \ar[r] \ar[dd]_-{\f_{k+1}} & P_{k}
      \ar[rr] \ar[dd]_-{\f_k} \ar@{.>}[ddl]_-{\sigma_k} && 
      \syz{k}{M} \ar[dd]_-{\widetilde{\f}_k}
      \ar[dl] \ar@{.>}@/_{1.5pc}/[ddll]_-{\sigma_{k-1}}\\
      & & & L \ar[dr] \ar@{.>}[dl] \\
      \cdots \ar[r] &Q_{k+1-i} \ar[r] & Q_{k-i} \ar[rr] && \syz{k-i}{N}\:.
    }
  \end{equation*}
  It is now straightforward to construct the homotopies
  $\mapdef{\sigma_n}{P_n}{Q_{n+1-i}}$ for $n \ge k$, so $\f$ is
  $0$-homotopic in high degrees, i.e.\ one has $\hat{\f}=0$ in
  $\xExthat{stable}{i}{M}{N}$.
\end{bfhpg*}

\begin{bfhpg*}[Surjectivity] 
  An element $\widetilde{\f}$ in $\xExthat{B \& C}{i}{M}{N}$ is a
  family of elements in the direct system of modules
  $\uHom{\syz{k}{M}}{\syz{k-i}{N}}$.  Such a family is determined by
  an element $\underline{\f}$ in $\uHom{\syz{k}{M}}{\syz{k-i}{N}}$ for
  some $k \gg 0$, and $\underline{\f}$ is represented by a
  homomorphism $\f \in \Hom{\syz{k}{M}}{\syz{k-i}{N}}$. Lifting $\f$
  to a morphism of projective resolutions $\Thb{k}{P} \to
  \susp^i\Thb{k-i}{Q}$ yields an element $\widehat{\f}$ in
  $\xExthat{stable}{i}{M}{N}$ with $\mu^N_i(\widehat{\f}) =
  \widetilde{\f}$.
\end{bfhpg*}

The next result is used in the proof of \thmref{duality}. It is dual
to \ref{ipg:lim}\eqref{2} and has a similar proof, which can also be
extracted from \seccite[4]{GMs94}.

\begin{lem}
  \protect\pushQED{\qed}%
  \label{lem:complete cohomology}
  Let $M$ and $N$ be $R$-modules with projective resolutions $P \qra
  M$ and $Q \qra N$. For every $i\in\ZZ$ there is an isomorphism of
  $\kk$-modules
  \begin{equation*}
    \Text{i}{M}{N} \dis \colim_{k\ge 0}\HH[i]{\Hom{P}{\Thb{k}{Q}}}
  \end{equation*}
  which is natural in the second argument.
\end{lem}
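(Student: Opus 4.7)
The plan is to mimic the proof of \lemref{iso} read dually. There, the J-completion of a homological functor $\T$ was identified with a limit of $\T_{k+i}$ evaluated at cosyzygies, exploiting the connecting maps from the short exact sequences $0 \to \Omega^{k-1}N \to I^{k-1} \to \Omega^kN \to 0$ of the minimal injective resolution of $N$. Here each $\Ext{i}{M}{-}$ plays the role of $\T_i$, syzygies replace cosyzygies, the short exact sequences $0 \to \syz{k}{N} \to Q_{k-1} \to \syz{k-1}{N} \to 0$ from the projective resolution $Q$ replace the injective ones, and a colimit replaces the limit. This is precisely the strategy of \seccite[4]{GMs94}.

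First I would identify, for $k+i \ge 1$, the satellite $\sa_k\Ext{k+i}{M}{N}$ with $\Ext{k+i}{M}{\syz{k}{N}}$ by iterated dimension-shifting along the syzygy sequences (each step uses that $\Ext{j}{M}{Q_\ell} = 0$ for $j \ge 1$). This yields $\Text{i}{M}{N} \dis \colim_{k \gg 0}\Ext{k+i}{M}{\syz{k}{N}}$, with transition maps $\Ext{k+i}{M}{\syz{k}{N}} \xra{\delta} \Ext{k+i+1}{M}{\syz{k+1}{N}}$ given by the connecting homomorphism of the syzygy short exact sequence. Next, because $\Thb{k}{Q}$ is the brutal truncation of $Q$ in degrees $\ge k$, all of its homology vanishes except in degree $k$, where it is $\syz{k}{N}$; thus $\Thb{k}{Q}$ is quasi-isomorphic to $\syz{k}{N}$ placed in degree $k$. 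Since $P$ is a projective resolution of $M$, a direct derived-category computation then gives $\HH[i]{\Hom{P}{\Thb{k}{Q}}} \dis \Ext{k+i}{M}{\syz{k}{N}}$.

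What remains is to match the transition maps on the two sides. The transition in the system $\{\HH[i]{\Hom{P}{\Thb{k}{Q}}}\}_k$ is induced by the canonical morphism $\Thb{k}{Q} \to \Thb{k+1}{Q}$ in the derived category which, via the quasi-isomorphisms to shifted syzygies, corresponds to the extension class of the syzygy short exact sequence. Under the identifications above this is exactly the connecting map $\delta$, so the two directed systems are isomorphic and their colimits agree. The main obstacle is the bookkeeping for these transition maps: the obvious chain-level inclusions $\Thb{k+1}{Q} \hookrightarrow \Thb{k}{Q}$ point the wrong way, so the colimit maps must be realized at the derived-category level via the extension class, and keeping track of suspensions and signs requires some care. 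Naturality in $N$ follows because every construction is natural in the resolution $Q$, and any two projective resolutions of $N$ are chain homotopy equivalent.
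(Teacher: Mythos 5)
Your strategy---dualize \pgref{ipg:lim} and \lemref{iso}, rewriting Mislin's P-completion first as $\colim_k\Ext{k+i}{M}{\syz{k}{N}}$ and then as $\colim_k\HH[i]{\Hom{P}{\Thb{k}{Q}}}$---is exactly what the paper has in mind; the paper supplies no proof of its own, only the reference to duality with \pgref{ipg:lim} and to Mislin. Where you make the argument harder than it needs to be is the ``obstacle'' with the transition maps. The inclusions $\Thb{k+1}{Q}\hookrightarrow\Thb{k}{Q}$ do point the wrong way, but there are also honest chain-level \emph{projections} $p^k\colon\Thb{k-1}{Q}\onto\Thb{k}{Q}$, the identity in degrees $\ge k$ and zero in degree $k-1$; they sit in degreewise split exact sequences $0\to\susp^{k-1}Q_{k-1}\to\Thb{k-1}{Q}\to\Thb{k}{Q}\to 0$. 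These are the precise duals of the embeddings $\tha{k}{I}\hookrightarrow\tha{k-1}{I}$ from \pgref{ipg:lim}: with $I\oplus J=\Hom[\kk]{Q}{E}$ one has $\Hom[\kk]{p^k}{E}$ equal to that embedding, which is exactly what makes the proof of \thmref{duality} go through. Applying $\HH[i]{\Hom{P}{-}}$ to the $p^k$ therefore gives the colimit system directly at the chain level, with no need to pass to the derived category, chase extension classes through roofs, or track signs and suspensions. Granting that, the rest of your argument---including your correct observation that $\colim_k\Ext{k+i}{M}{\syz{k}{N}}$ is eventually constant because $\Ext{j}{M}{Q_\ell}=0$ for $j\ge 1$---and the naturality claim go through as you describe.
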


\section*{Acknowledgments}

\noindent
We thank Alex Martsinkovsky and Guido Mislin for making the theses
\cite{JRs-phd,MTr-phd} of Jeremy Russell and Mauro Triulzi available
to us, and we extend our gratitude to the referee for pertinent
comments that improved the presentation at several points and led to
Question \pgref{question}.

\def\cprime{$'$}
  \providecommand{\arxiv}[2][AC]{\mbox{\href{http://arxiv.org/abs/#2}{\sf
  arXiv:#2 [math.#1]}}}
  \providecommand{\oldarxiv}[2][AC]{\mbox{\href{http://arxiv.org/abs/math/#2}{\sf
  arXiv:math/#2
  [math.#1]}}}\providecommand{\MR}[1]{\mbox{\href{http://www.ams.org/mathscinet-getitem?mr=#1}{#1}}}
  \renewcommand{\MR}[1]{\mbox{\href{http://www.ams.org/mathscinet-getitem?mr=#1}{#1}}}
\providecommand{\bysame}{\leavevmode\hbox to3em{\hrulefill}\thinspace}
\providecommand{\MR}{\relax\ifhmode\unskip\space\fi MR }
% \MRhref is called by the amsart/book/proc definition of \MR.
\providecommand{\MRhref}[2]{%
  \href{http://www.ams.org/mathscinet-getitem?mr=#1}{#2}
}
\providecommand{\href}[2]{#2}

\end{document}